\newcommand{\Z}{{\mathbb{Z}}}
\newcommand{\C}{{\mathbb{C}}}
\newcommand{\R}{{\mathbb{R}}}
\newcommand{\e}{\varepsilon}
\newcommand{\pushout}{\ar@{}[ul(0.35)]|-{\ulcorner}}
\newcommand{\pullback}{\ar@{}[dr(0.35)]|-{\lrcorner}}
\DeclareMathOperator{\rep}{rep}
\newcommand{\AR}{A_{\R}}
\newcommand{\ARS}{A_{\R{,}S}}
\newcommand{\kk}{\ensuremath{\Bbbk}}
\newcommand{\repAR}{\rep_{\kk}(\AR)}
\newcommand{\repARS}{\rep_{\kk}(\ARS)}
\newcommand{\DbARS}{{\mathcal{D}^b(\ARS)}}
\newcommand{\CAR}{{\mathcal{C}(\AR)}}
\newcommand{\CARS}{{\mathcal{C}(\ARS)}}
\newcommand{\CsAR}{\mathcal C_{\overline{\R}}}
\newcommand{\CsARS}{\mathcal C_{\overline{\R,S}}}
\newcommand{\Cinftyclosed}{\mathcal C(A_{\overline{\infty}})}
\newcommand{\Ninftyclosed}{\mathbf N_{\overline{\infty}}}
\newcommand{\NRclosed}{\mathbf N_{\overline{\R}}}
\newcommand{\NRSclosed}{\mathbf N_{\overline{\R,S}}}
\newcommand{\KsplitCARS}{{K_0^{\text{split}}(\CARS)}}
\newcommand{\pmu}{p_{\overline{\mu}}}
\DeclareMathOperator{\Hom}{Hom}
\definecolor{linkylink}{RGB}{200,0,0}
\definecolor{gordanagreen}{RGB}{0,155,0}
\definecolor{readableyellow}{RGB}{200,200,0}
\newcommand{\id}{\mathop{\text{id}}}
\newcommand{\Ext}{\mathop{\text{Ext}}}
\newcommand{\pmset}{\{{-},{+}\}}
\newcommand{\Ind}{\text{Ind}}
\newcommand{\Edown}{\mathcal E^{\downarrow}}
\newcommand{\Eup}{\mathcal E^{\uparrow}}
\newtheorem{lemma}{Lemma}[subsection]
\newtheorem{proposition}[lemma]{Proposition}
\newtheorem{theorem}[lemma]{Theorem}
\newtheorem{cor}[lemma]{Corollary}
\newtheorem{thm}{Theorem}
\numberwithin{table}{subsection}
\theoremstyle{definition}
\newtheorem{definition}[lemma]{Definition}
\newtheorem{remark}[lemma]{Remark}
\newtheorem{example}[lemma]{Example}
\newtheorem{notation}[lemma]{Notation}
\newtheorem{construction}[lemma]{Construction}
\newtheorem{rulez}[lemma]{Rule}
\title[Continuous Quivers of Type $A$ (IV)]{Continuous Quivers of Type $A$ (IV)\\\ Continuous Mutation and Geometric Models of $\mathbf E$-clusters}
\author{Job Daisie Rock}
\address{Ghent, Belgium}
\email{\texttt{jobdrock@gmail.com}}
\date{\today}
\begin{document}

\begin{abstract}
This if the final paper in the series \emph{Continuous Quivers of Type $A$}.
In this part, we generalize existing geometric models of type $A$ cluster structures to the new $\mathbf E$-clusters introduced in part (III).
We also introduce an isomorphism of cluster theories and a weak equivalence of cluster theories.
Examples of both are given.
We use thes geometric models and isomorphisms of cluster theories to begin classifying continuous type $A$ cluster structures.
We also introduce a continuous generalization of mutation.
This encompasses mutation and (infinite) sequences of mutation.
Then we link continuous mutation to our earlier geometric models.
Finally, we introduce the space of mutations which generalizes the exchange graph of a cluster structure.
\end{abstract}

\maketitle

\tableofcontents

\section*{Introduction}
\subsection*{History} Cluster algebras were first introduced by Fomin and Zelevinksy in \cite{FZ1}.
In particle physics they can be used to study scattering diagrams (see work of Golden, Goncharov, Spradlin, Vergud, and Volovicha in \cite{GGSVV}).
The structure of cluster algebras was first categoricalized independently by two teams in 2006: Buan, Marsh, Reineke, Reiten, and Todorov in \cite{BMRRT}  and Caldero, Chapaton, and Schiffler in \cite{CCS}.
The first team's construction provided a way to construct a cluster category from the category of finitely generated representations of a Dynkin quiver and the second team's construction related the category to a geometric model.
This geometric model on a polygon was extended to the infinity-gon by Holm and J{\o}rgensen and the completed infinity-gon by Baur and Graz in \cite{HolmJorgensen} and \cite{BaurGraz}, respectively.
In \cite{FST}, Fomin, Shapiro, and Thurston expanded on \cite{CCS} and studies the relationship between triangulated surfaces and cluster algebras.
We refer the reader to \cite[Chapter 4.1]{Am} for the state of the art at the time of writing.
A continuous construction, both categorically and geometrically, was introduced by Igusa and Todorov in \cite{IgusaTodorov1}.
Structures relating to clusters are still activity studied (\cite{example1,example2,example3,example4}).
In particular, continuous structures were studied by Arkani-Hamed, He, Salvatori, and Thomas in \cite{AHST} and by Kulkarni, Matherne, Mousavand and the author in \cite{KMMR}.

In Part (I) of this series Igusa, Todorov, and the author introduced continuous quivers of type $A$, denoted $\ARS$, which generalize quivers of type $A$ \cite{IgusaRockTodorov1}.
Results about decomposition of pointwise finite-dimensional representations of such a quiver and the category of finitely-generated representations (denoted $\repARS$) were proven.
In Part (II) the author generalized the Auslander--Reiten quiver for finitely-generated representations of an $A_n$ quiver and its bounded derived category to the Auslander--Reiten space for $\repARS$ and its bounded derived category, denoted $\DbARS$ \cite{Rock}.
Results were proven about constructions of extensions in $\repARS$ and distinguished triangles in $\DbARS$ in relation to the Auslander--Reiten space.
In Part (III) Igusa, Todorov, and the author used Parts (I) and (II) to  classify which continuous quivers of type $A$ are derived equivalent, construct the new continuous cluster category (denoted $\CARS$) with $\mathbf E$-clusters, and generalize the notion of cluster structures to cluster theories \cite{IgusaRockTodorov2}.
It was shown that each element in an $\mathbf E$-cluster has none or one choice of mutation and the result of mutation yielded another $\mathbf E$-cluster.
It was also shown that some type $A$ cluster theories (recovered from existing cluster structures) can be embedded in this new cluster theory.

\subsection*{Contributions}
The final part of this series begins with a review of the relevant parts of the previous works.
Then, we define an isomorphism of cluster theories and a weak equivalence of cluster theories (Definition~\ref{def:equivalence of cluster theories}).
In Sections \ref{sec:straight AR} and \ref{sec:other orientations of AR}, we construct geometric models of $\mathbf E$-cluster theories from part (III) of this series \cite{IgusaRockTodorov2}.
We obtain an additive category $\CsARS$ and a pairwise compatibility condition $\NRSclosed$ on its indecomposables that induces the cluster theory $\mathscr{T}_{\NRSclosed}(\CsARS)$.
The purpose of the geometric models is to generalize triangulations of polygons and ideal triangulations of the hyperbolic plane, which encode several existing type $A$ cluster structures \cite{CCS,IgusaTodorov1}.
In particular, we want a connection to the cluster theory $\mathscr{T}_{\mathbf E}(\CARS)$ from \cite{IgusaRockTodorov2}.
We prove that the geometric models are ``correct'' in Theorem~\ref{thm:A} and then use them to prove Theorem~\ref{thm:B}.

\begin{thm}[Theorems \ref{thm:geometric model} and \ref{thm:general geometric model}]\label{thm:A}
Let $\ARS$ be a continuous quiver of type $A$.
The pairwise compatibility condition $\NRSclosed$ induces the $\NRSclosed$-cluster theory of $\CsARS$ and there is an isomorphism of cluster theories $(F,\eta):\mathscr T_{\NRSclosed}(\CsARS)\to \mathscr T_{\mathbf E}(\CARS)$.
\end{thm}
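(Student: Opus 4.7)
The plan is to build the isomorphism by first constructing a bijection between indecomposable objects of $\CsAR$ (arcs) and indecomposable objects of $\CAR$, then lifting it to an additive equivalence, and finally checking that under this equivalence the geometric noncrossing conditions match the $\mathbf E$-compatibility conditions introduced in Part (III). Concretely, I would start by defining $F$ on generators: each arc, as a combinatorial object determined by its endpoints (possibly at $\pm\infty$ or on the boundary, possibly approached in a limit sense governed by the orientation of $A_\R$), is sent to the corresponding indecomposable of $\CAR$ whose support interval has the same endpoints with the matching open/closed data. Because the arcs are by definition the indecomposable summands of $\CsAR$ and $\CAR$ is Krull--Schmidt on its indecomposable class, this bijection extends uniquely to an additive, essentially surjective functor $F$, which one then equips with the identity-like natural isomorphism $\eta$ that records how $F$ behaves on formal direct sums.

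Next I would verify the compatibility of the two combinatorial data on pairs of indecomposables. For this I would go through the four noncrossing rules (Rules \ref{rule:easy cross}, \ref{rule:straightforward cross}, \ref{rule:macro cross}, \ref{rule:micro cross}) case by case and check that two arcs $a,b$ satisfy $\NRclosed$ if and only if the corresponding indecomposables $F(a),F(b)$ satisfy the $\mathbf E$-compatibility relation in $\CAR$. The easy and straightforward cross conditions should line up directly with the usual interval/Ext-vanishing criteria already used in Parts (II) and (III); the macro and micro cross conditions are designed to encode exactly the limit/endpoint subtleties that give rise to $\mathbf E$-compatibility at the critical points of $A_\R$. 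After this case check, $F$ restricts to a bijection between maximal noncrossing collections of arcs and $\mathbf E$-clusters, which is the data needed to promote $(F,\eta)$ from an equivalence of additive categories to an isomorphism of cluster theories in the sense of Definition \ref{def:equivalence of cluster theories}.

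Finally, one must check that $(F,\eta)$ intertwines the mutation structures: whenever an arc $a$ in a noncrossing collection $T$ is mutable and $a \mapsto a^\ast$ is the geometric flip, then $F(a)\in F(T)$ is $\mathbf E$-mutable in $\CAR$ and $F(a^\ast)$ is its unique mutation partner (and vice versa for the non-mutable case). This should follow from the explicit description of $\mathbf E$-mutation in \cite{IgusaRockTodorov2} together with the way endpoints of the flipped arc are determined by the small quadrilateral surrounding $a$ in its noncrossing collection. The main obstacle I expect is precisely the micro cross rule together with mutability at cluster elements whose endpoints sit at critical points or at $\pm\infty$: here the combinatorics of arcs and the representation-theoretic compatibility are both subtle, and a careful orientation-by-orientation analysis (straight descending first, as in the proof of Theorem \ref{thm:projectives reach injectives}, then generalizing to arbitrary orientations to obtain Theorem \ref{thm:general geometric model}) will be needed to confirm that the two notions really do agree in every limiting configuration.
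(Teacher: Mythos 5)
Your core strategy---a bijection between arcs and indecomposables of $\CAR$, followed by a case-by-case check that two arcs cross if and only if the corresponding indecomposables fail to be $\mathbf E$-compatible---is exactly the paper's (Definitions \ref{def:arc indecomposable bijection} and \ref{def:all bijection}, Propositions \ref{prop:bijection of arcs and indecomposables} and \ref{prop:all bijection}, Lemmas \ref{lem:crossing = incompatible} and \ref{lem:general crossing = incompatible}). But there is a genuine misstep in the middle of your argument. You propose to extend the bijection on indecomposables to ``an additive, essentially surjective functor'' and then to ``promote $(F,\eta)$ from an equivalence of additive categories to an isomorphism of cluster theories.'' No such additive equivalence $\CsAR\to\CAR$ exists: the Hom-sets of $\CsAR$ are rigged so that $\Hom_{\CsAR}(\theta,\theta')$ is nonzero precisely when $\theta=\theta'$ or $\theta$ crosses $\theta'$ (Definitions \ref{def:CR} and \ref{def:general arc category}), whereas in $\CAR$ one frequently has $\Hom_{\CAR}(V,W)\cong k$ for $\mathbf E$-compatible pairs. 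More to the point, the $F$ in Definition \ref{def:equivalence of cluster theories} is a functor between the cluster theories $\mathscr T_{\NRclosed}(\CsAR)$ and $\mathscr T_{\mathbf E}(\CAR)$---groupoids whose objects are clusters and whose morphisms are generated by mutations---not between the additive categories themselves, so this step is both unachievable and not what is required.

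The second issue is that your final stage (checking by hand, orientation by orientation, that geometric flips correspond to $\mathbf E$-mutations, which you flag as the main obstacle) is unnecessary, and you omit the one thing that does need to be established there: that $\NRclosed$ \emph{induces} a cluster theory at all, i.e.\ the none-or-one-replacement condition of Definition \ref{def:cluster theory}. Both points are handled at once by the purely formal Lemma \ref{lem:bijection and same compatibility gives equivalence}: since mutability and mutation partners are defined entirely in terms of the pairwise compatibility condition, a bijection on indecomposables that preserves and reflects compatibility automatically transports the induced-cluster-theory property from $\mathscr T_{\mathbf E}(\CAR)$ to $\mathscr T_{\NRclosed}(\CsAR)$ and yields the isomorphism of cluster theories, with $\eta_T$ given by restricting the bijection to each cluster. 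All of the genuine work lives where you correctly place it: in verifying Rules \ref{rule:easy cross}, \ref{rule:straightforward cross}, \ref{rule:macro cross}, and \ref{rule:micro cross} against $\mathbf E$-compatibility (via $g$-vectors and the rectangle criterion of Proposition \ref{prop:incompatible rectangles}), including the endpoint and sink/source subtleties you anticipate.
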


\begin{thm}[Corollary \ref{cor:sinks and sources count}]	\label{thm:B}
	Let $\ARS$ and $A_{\R,R}$ be continuous quivers of type $A$ such that one of the following is true: (i) $|S|=|R|$ and $|S|<\infty$, (ii) $S$ and $R$ are both bounded on exactly one side, or (iii) both $S$ and $R$ are indexed by $\Z$.
	Then $\mathscr{T}_{\mathbf{E}}(\CARS)\cong \mathscr{T}_{\mathbf E}(\mathcal C(A_{\R,R}))$.
\end{thm}

In Section \ref{sec:connections to E-mutations} we use the geometric models to show how one may visualize $\mathbf E$-mutations.
Some of these pictures are different from the usual ``swap diagonals on a quadrilateral'' that appears for triangulations of polygons and ideal triangulations of the hyperbolic plane.

In Section~\ref{sec:continuous mutation and mutation paths} we define a continuous generalization of mutation (Definition \ref{def:continuous mutation}) with two key motivations.
The first is to unify various ways of describing a sequence of mutations (possibly infinite as in \cite{BaurGraz}).
In Part (III), Igusa, Todorov, and the author show that the indecomposable objects that were projective in $\repAR$ form an $\mathbf E$-cluster but many of the elements are not $\mathbf E$-mutable \cite[Examples 4.3.2 and 4.4.1]{IgusaRockTodorov2}.
The second motivation for continuous mutation is to work around this obstruction so that we may mutate the cluster of projectives into the cluster of injectives as one usually does for type $A_n$.
In Section~\ref{sec:connections to E-mutation paths} we show how mutations and continuous mutations can be interpreted with these geometric models.

We use continuous mutation to define mutation paths (Definition \ref{def:mutation path}) and generalize the exchange graph of a cluster structure to the space of mutations for a cluster theory (Definition \ref{def:space of mutations}).
For a cluster theory $\mathscr{T}_{\mathbf P}(\mathcal C)$, we denote its space of mutations by $\mathbf P(\mathcal C)$.
\begin{thm}[Propositions \ref{prop:space of mutations},	\ref{prop:not Hausdorff}, and \ref{prop:path endpoints}]\label{thm:intro:space of mutations}
	Let $\mathscr{T}_{\mathbf P}(\mathcal C)$ be a cluster theory and $\mathbf P(\mathcal C)$ its the space of mutations.
	Then $\mathbf P(\mathcal C)$ is a non-Hausdroff topological space where each path begins and ends at a $\mathbf P$-cluster, up to homotopy.
\end{thm}

In Definition \ref{def:reachable} we define what it means for one cluster to be (strongly) reachable from another.
We then show we have achieved the goal of working around the afore-mentioned obstruction.

\begin{thm}[Theorem \ref{thm:projectives reach injectives}]\label{thm:intro:projectives reach injectives}\label{thm:D}
Consider the $\mathbf E$-cluster theory of $\CARS$ where $\ARS$ has the straight descending orientation.
The cluster of injectives is strongly reachable from the cluster of projectives.
\end{thm}

\subsection*{Future Work}
The exchange graph of an $A_n$ cluster structure is well-understood but the space of mutations for $\mathbf E$-clusters poses difficult question due to continuous mutations (Section~\ref{sec:space of mutations}).
However, preliminary calculations suggest the techniques to prove Theorem~\ref{thm:D} may be generalized to all continuous quivers $\ARS$ of type $A$ where $|S|<\infty$.

It is not yet clear which $\mathbf E$-cluster theories for continuous type $A$ quivers are equivalent.
Some theories are shown to be isomorphic (see Propositions \ref{prop:opposite cluster theories} and \ref{prop:reversing cluster theories}) but the whole classification is still open (Section \ref{sec:cluster theory classification}).

The next question to ask is, ``What about continuous types other than $A$?''
The next steps are continuous types $\widetilde{A}$ and $D$.
Each present their own complications to our constructions.
If one performs a similar constructions for continuous type $D$ then the resulting cluster theory should be similar to Igusa and Todorov's construction in \cite{IgusaTodorov0}.
Preliminary work by the C.~Paquette, E.~Y{\i}ld{\i}r{\i}m, and the author show that continuous representations of type $D$ decompose similarly to representations of a $D_n$ quver.
Also, Hanson and the author have proven that representations of $\widetilde{A}$ decompose analogously to representations of $\widetilde{A}_n$ \cite{HR20}.

\subsection*{Changes to this Version}
This version has been reorganized and party rewritten for readability.
The original version of this paper also contained a section that related several different cluster theories of type $A$, including the new $\mathbf E$-cluster theory.
Due to length and recent results, this section has been removed and then expanded into a separate work \cite{Rock2}.

\subsection*{Conventions}
Here we state conventions used throughout the paper.
We have a fixed field $\kk$ throughout.
When we say a ``Krull--Schmidt category'' we mean a ``skeletally small Krull--Schmidt additive category.''

Let $a<b\in\R\cup\{\pm \infty\}$.
By the notation $|a,b|$ we mean an interval subset of $\R$ whose endpoints are $a$ and $b$.
The `$|$'s indicate that the inclusion of $a$ or $b$ is not known or not relevant.

When we say ``arc'' in reference to a polygon with more than 3 sides, we mean what many call a ``diagonal.''
That is, a straight line from one vertex of the polygon to a nonadjacent vertex.
This terminology better coincides with definitions in Section \ref{sec:geometric models of E-clusters}.
We will still use ``diagonals'' for quadrilaterals when we talk about mutation.

\subsection*{Acknowledgements}
The majority of this work was completed while the author was a graduate student at Brandeis University and the author would like to thank the university for their hospitality.
The author would also like to thank Kiyoshi Igusa and Gordana Todorov for their guidance and support.
They would like to further thank Ralf Schiffler for organizing the Cluster Algebra Summer School at the University of Connecticut in 2017 where the idea for this series was conceived.
Finally, they would like to thank Eric Hanson for helpful discussions.

\section{Prerequisites from This Series}
In this section we revisit the most relevant definitions and theorems from parts (I) and (III) in this series, divided into two subsections.

\subsection{Continuous Quivers of Type $A$ and Their Representations}
In this section we state relevant definitions and theorems from part (I) of this series.
In particular, we provide a definition of a continuous quiver of type $A$, its representations, and its indecomposables.
The reader may use the picture in Figure \ref{fig:continuous quiver of type A} for intuition when reading the definition of a continuous quiver of type $A$.
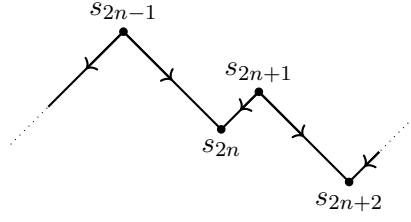
\begin{figure}
\centering	
\begin{tikzpicture}
\draw[dotted] (-.5,-.5) -- (.1,.1);
\draw[dotted] (4.3,-.7) -- (4.8,-.2);
\draw[thick] (0,0) -- (1,1) -- (2.3,-0.3) -- (2.8, 0.2) -- (4, -1) -- (4.4, -0.6);
\draw[thick,->] (1,1) -- (.5,.5);
\draw[thick,->] (1,1) -- (1.65,0.35);
\draw[thick,->] (2.8,0.2) -- (2.55,-0.05);
\draw[thick,->] (2.8,0.2) -- (3.4,-0.4);
\draw[thick,->] (4.4,-0.6) -- (4.2,-0.8);
\filldraw (1,1) circle[radius=.5mm];
\filldraw (2.3,-.3) circle[radius=.5mm];
\filldraw (2.8,.2) circle[radius=.5mm];
\filldraw (4,-1) circle[radius=.5mm];
\draw (1,1) node [anchor=south] {$s_{2n-1}$};
\draw (2.3,-0.3) node[anchor=north] {$s_{2n}$};
\draw (2.8,0.2) node[anchor=south] {$s_{2n+1}$};
\draw (4,-1) node[anchor=north] {$s_{2n+2}$};
\end{tikzpicture}
\caption{An example of a continuous quiver of type $A$.}\label{fig:continuous quiver of type A}
\end{figure}

\begin{definition}\label{def:AR}
A \ul{quiver of continuous type $A$}, denoted by $\ARS$, is a triple $(\R,S,\preceq)$, where:
\begin{enumerate}
\item 
\begin{enumerate}
\item$S\subset \R$ is a discrete subset, possibly empty, with no accumulation points.
\item Order on $S\cup\{\pm\infty\}$ is induced by the order of $\R$, and $-\infty<s<+\infty$ for $\forall s\in S$.
\item Elements of $S\cup\{\pm\infty\}$ are indexed by a subset of $\Z\cup\{\pm\infty\}$ so that $s_n$ denotes the element of 
$S\cup\{\pm\infty\}$ with index $n$. The indexing must adhere to the following two conditions:
\begin{itemize}
\item[i1] There exists $s_0\in S\cup\{\pm\infty\}$.
\item[i2] If $m\leq n\in\Z\cup\{\pm\infty\}$ and $s_m,s_n\in S\cup\{\pm\infty\}$ then for all $p\in\Z\cup\{\pm\infty\}$ such that $m\leq p \leq n$ the element $s_p$ is in $S\cup\{\pm\infty\}$.
\end{itemize}
\end{enumerate}
\item New partial order $\preceq$ on $\R$, which we call  the \ul{orientation} of $\AR$, is defined as:
\begin{itemize}
\item[p1\ ] The $\preceq$ order between consecutive elements of $S\cup\{\pm\infty\}$ does not change.
\item[p2\ ] Order reverses at each element of $S$.
\item[p3\ ] If $n$ is even $s_n$ is a sink.
\item[p3'] If $n$ is odd $s_n$ is a source.
\end{itemize}
\end{enumerate}
\end{definition}

\begin{definition}\label{def:representation}
Let $\ARS=(\R,S\preceq)$ be a continuous quiver of type $A$.
A \ul{representation} $V$ of $\ARS$ is the following data:
\begin{itemize}
\item A vector space $V(x)$ for {each} $x\in \R$.
\item For every pair $y\preceq x$ in $\AR$ a linear map $V(x,y):V(x)\to V(y)$ such that if $z\preceq y \preceq x$ then $V(x,z)=V(y,z)\circ V(x,y)$.
\end{itemize}
We say $V$ is \ul{pointwise finite-dimensional} if $\dim V(x) < \infty$ for all $x\in\R$.
\end{definition}

\begin{definition}\label{def: MI}
Let $\ARS$ be a continuous quiver of type $A$ and $I\subset \R$ be an interval.
We denote by $M_I$ the representation of $\ARS$ where
\begin{align*}
M_I(x) &= \left\{ \begin{array}{ll} \kk & x\in I \\ 0 & \text{otherwise} \end{array} \right. &
M_I(x,y) &= \left\{ \begin{array}{ll} 1_{\kk} & y\preceq x\in I \\ 0 & \text{otherwise.} \end{array} \right.
\end{align*}
We call $M_I$ an \ul{interval indecomposable}.
\end{definition}

We require the two following results from \cite{IgusaRockTodorov1} (the first recovers a result from \cite{BotnanCrawleyBoevey}).
\begin{theorem}[Theorems 2.3.2 and 2.4.13 in \cite{IgusaRockTodorov1}]\label{thm:GeneralizedBarCode}
Let $\ARS$ be a continuous quiver of type $A$.
For any interval $I\subset \R$, the representation $M_I$ of $\ARS$ is indecomposable.
Any indecomposable pointwise finite-dimensional representation of $\ARS$ is isomorphic to $M_I$ for some interval $I$.
Finally, any pointwise finite-dimensional representation $V$ of $\ARS$ is the direct sum of interval indecomposables.
\end{theorem}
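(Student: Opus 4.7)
The theorem has three assertions, and my plan is to establish them in the order 1, 3, 2, since the second will follow formally from the third once the first is in hand.

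For the first assertion, that each $M_I$ is indecomposable, I would compute $\End(M_I)$ directly. An endomorphism $\varphi\colon M_I \to M_I$ is specified by a scalar $\varphi(x)\in k$ at each $x\in I$, and naturality with respect to the identity structure maps of $M_I$ forces $\varphi(x)=\varphi(y)$ whenever $x,y\in I$ are $\preceq$-comparable. By property p1, on each closed arc between two consecutive elements of $S\cup\{\pm\infty\}$ the restriction of $\preceq$ is a linear order, so $\varphi$ is constant on the intersection of $I$ with any such arc. By properties p2, p3, and p3', each sink $s_{2n}$ lying in $I$ is a local $\preceq$-minimum and each source $s_{2n+1}$ in $I$ a local $\preceq$-maximum, so both bridge the two adjacent arcs. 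Since $S$ is discrete with no accumulation points, any two elements of $I$ can be linked by a finite $\preceq$-zigzag through sinks and sources of $A_\R$ lying in $I$; hence $\varphi$ is a single scalar and $\End(M_I)\cong k$, which has no nontrivial idempotents.

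For the decomposition assertion, my plan is to reduce to the Botnan--Crawley-Boevey decomposition theorem \cite{BotnanCrawleyBoevey}. Denote by $L_n$ the closed arc between two consecutive elements of $S\cup\{\pm\infty\}$; on $L_n$ the order $\preceq$ is linear, so the restriction $V|_{L_n}$ of any pointwise finite-dimensional $V$ is a pointwise finite-dimensional persistence module on a totally ordered set, and it therefore decomposes uniquely (up to reindexing) as a direct sum of interval summands. The remaining task is to glue these local decompositions coherently across each element $s\in S$. At such an $s$, the finite-dimensional vector space $V(s)$ inherits two direct-sum decompositions, one from each side, and a finite-dimensional change-of-basis aligns them so that the adjacent interval summands on each side pair off and may be concatenated into single interval summands of $V$ crossing $s$. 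Iterating along the enumeration of $S$ anchored at $s_0$ (provided by property i1) yields a global decomposition $V\cong\bigoplus_\lambda M_{I_\lambda}$. The second assertion then follows at once: if $V$ is pointwise finite-dimensional and indecomposable, its interval decomposition must consist of a single summand, so $V\cong M_I$ for some interval $I$.

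The principal obstacle will be organising the gluing so that refinements at different elements of $S$ do not conflict. A refinement at a single $s\in S$ can redistribute basis vectors among the interval summands that touch $s$, and one must ensure that later refinements at neighbouring sinks and sources respect the choices already made. My plan is to induct outward from $s_0$, handling one new element of $S$ at each step, and to invoke Zorn's lemma in the limit when $S$ is infinite on one or both sides. The pointwise-finite-dimensional hypothesis is essential here: it reduces each gluing step to a finite-dimensional linear-algebra problem and prevents the dimensions of the interval summands from growing without bound as the induction proceeds, thereby securing a coherent global decomposition.
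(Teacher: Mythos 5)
First, a caveat: the paper you are reading does not prove this statement at all --- it is quoted verbatim from Part (I) of the series (\cite{IgusaRockTodorov1}, Theorems 2.3.2 and 2.4.13) and used as a black box, with the remark that the first part recovers a result of \cite{BotnanCrawleyBoevey}. So there is no in-paper proof to compare against, and your proposal has to be judged on its own. Your treatment of the first assertion is fine: computing $\End(M_I)\cong k$ by propagating the scalar along $\preceq$-comparable pairs and across the finitely many sinks and sources meeting any bounded subinterval (discreteness of $S$) is a complete and standard argument, and the deduction of the second assertion from the third is immediate.

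The genuine gap is in the gluing step for the decomposition, which is precisely the mathematical content of the theorem beyond the totally ordered case handled by Botnan--Crawley-Boevey. At a sink $s$, the left and right restrictions each induce a decomposition of the finite-dimensional space $V(s)$ into lines, and you assert that ``a finite-dimensional change-of-basis aligns them so that the adjacent interval summands on each side pair off.'' This begs the question: the two decompositions can only be modified by automorphisms of the respective restrictions, and these act on $V(s)$ through proper (block-triangular) subgroups of $GL(V(s))$ determined by the hom-spaces between the interval summands meeting $s$. Already for $V|_L = M_L\oplus M_{\{s\}}$ the line attached to $M_L$ at $s$ is rigid (since $\Hom(M_L,M_{\{s\}})$ vanishes at $s$), so whether two given summands can be concatenated, and \emph{which} pairing of left and right intervals is achievable, depends on the relative position of the two line decompositions; one must prove that a compatible pairing always exists, typically by ordering the intervals by how long they persist away from $s$ and arguing triangularly. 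Your proposal neither states nor proves this key lemma. The second unresolved point is global coherence: the automorphism of $V|_{[s_n,s_{n+1}]}$ used to fix the matching at $s_{n+1}$ will in general move the lines at $s_n$ and destroy the matching already achieved there, so ``induct outward from $s_0$ and invoke Zorn's lemma'' does not obviously terminate or stabilize; the partial matchings do not form a chain-complete poset in any evident way. Until both the single-vertex alignment lemma and the propagation issue are handled, the decomposition assertion --- and with it the classification of indecomposables --- remains unproved.
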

\begin{theorem}[Theorem 2.1.6 and Remark 2.4.16 in \cite{IgusaRockTodorov1}]\label{thm:indecomp projs}
Let $P$ be a projective indecomposable in the category of pointwise finite-dimensional representations of a continuous quiver $\ARS$.
Then there exists $a\in \R\cup\{\pm\infty\}$ such that $P$ is isomorphic to one of $P_a$, $P_{(a}$, or $P_{a)}$, given by:
\begin{align*}
P_a(x) &= \left\{\begin{array}{ll} \kk & x \preceq a \\ 0 & \text{otherwise} \end{array}\right. &
P_a(x,y) &= \left\{\begin{array}{ll} 1_{\kk} & y\preceq x \preceq a \\ 0 & \text{otherwise} \end{array}\right. \\
P_{(a}(x) &= \left\{\begin{array}{ll}  \kk & x\preceq a\text{ and } x>a\text{ in }\R  \\ 0 & \text{otherwise} \end{array}\right. &
P_{(a}(x,y) &= \left\{\begin{array}{ll} 1_{\kk} & y\preceq x \preceq a \text{ and } x,y>a \\ 0 & \text{otherwise} \end{array}\right. \\
P_{a)}(x) &= \left\{\begin{array}{ll}  \kk & x\preceq a\text{ and }x<a\text{ in }\R  \\ 0 & \text{otherwise} \end{array}\right. &
P_{a)}(x,y) &= \left\{\begin{array}{ll} 1_{\kk} & y\preceq x \preceq a \text{ and }x,y<a \\ 0 & \text{otherwise} \end{array}\right.
\end{align*}
\end{theorem}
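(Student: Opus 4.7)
The plan is to combine Theorem \ref{thm:GeneralizedBarCode}, which classifies every indecomposable of $\Reppwf(A_\R)$ as an interval module $M_I$, with an explicit identification of which intervals $I\subseteq \R$ give a projective $M_I$. Since the projective indecomposable $P$ is in particular indecomposable and pointwise finite-dimensional, we have $P\cong M_I$ for some interval $I$, and the task reduces to matching $I$ with the support of one of $P_a$, $P_{(a}$, or $P_{a)}$.

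First I would check that each of $P_a$, $P_{(a}$, $P_{a)}$ really is of interval form. By condition (p1) of Definition \ref{def:AR}, the elements of $S \cup \{\pm \infty\}$ cut $\R$ into $\preceq$-monotone segments. Starting from $a$ and extending in each $\R$-direction until the $\preceq$-direction turns back at a sink, one obtains $\supp P_a$ as an interval of $\R$; intersecting with $\{x>a\}$ or $\{x<a\}$ gives the supports of $P_{(a}$ and $P_{a)}$, which are also intervals. Theorem \ref{thm:GeneralizedBarCode} then gives indecomposability.

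Next I would prove each is projective by exhibiting natural isomorphisms
\[\Hom(P_a, V) \cong V(a), \qquad \Hom(P_{(a}, V) \cong \colim_{x>a,\; x\preceq a} V(x), \qquad \Hom(P_{a)}, V) \cong \colim_{x<a,\; x\preceq a} V(x),\]
where the colimits are indexed by the totally ordered cofinal chains approaching $a$ from the appropriate $\R$-side within a single $\preceq$-monotone segment. Since kernels, cokernels, and images in $\Reppwf(A_\R)$ are computed pointwise, $V \mapsto V(a)$ is exact, so $P_a$ is projective; and filtered colimits of short exact sequences of finite-dimensional vector spaces along a totally ordered chain are exact, so $P_{(a}$ and $P_{a)}$ are projective.

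Finally, given a projective indecomposable $M_I$, I would analyze the $\preceq$-supremum of $I$. If $I$ has a $\preceq$-maximum $a$, then the evaluation isomorphism above produces a nonzero morphism $\phi\colon P_a \to M_I$ (sending $1\in P_a(a)$ to $1\in M_I(a)$), which is surjective because every $M_I(y)$ with $y\in I$ is generated from $M_I(a)$ via $M_I(a,y)$; since $M_I$ is projective this surjection splits, and indecomposability of $P_a$ forces $\ker\phi = 0$, giving $M_I \cong P_a$. If instead $I$ has no $\preceq$-maximum, then the interval $I$ must run toward either a source $s_n\in S$ or to $\pm\infty$ along one $\R$-side of its $\preceq$-supremum $a$; the same splitting argument with $P_{(a}$ or $P_{a)}$ in place of $P_a$ then identifies $M_I$ with the appropriate half-projective. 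The main obstacle is justifying that the $\preceq$-supremum of $I$ must itself lie in $\R\cup\{\pm\infty\}$ with exactly the sidedness needed to build the splitting map -- this boils down to a short case analysis using the monotone-segment decomposition of $\R$ from Definition \ref{def:AR}, ruling out intervals whose $\preceq$-supremum would sit in the interior of a monotone segment without being attained, since such an $M_I$ would admit a non-split epimorphism from the listed projective above it.
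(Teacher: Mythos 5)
This statement is imported verbatim from Part (I) of the series (Theorem 2.1.6 and Remark 2.4.16 of \cite{IgusaRockTodorov1}); the present paper gives no proof of it, so your proposal can only be measured against the argument one would naturally give there, and your outline is indeed that argument: realize the three families as interval modules, prove projectivity via a representability formula for $\Hom(P,-)$, then use a split-epimorphism argument to exclude every other interval module. Two points need repair. First, the formula $\Hom(P_{(a},V)\cong\colim_{x>a,\,x\preceq a}V(x)$ has the wrong variance. A morphism out of an interval module is a compatible family $(v_x)$ with $V(x,y)v_x=v_y$ for $y\preceq x$, and since the support of $P_{(a}$ is a chain with a $\preceq$-minimum (the adjacent sink) and no $\preceq$-maximum, such a family is an element of the \emph{inverse} limit $\varprojlim_{x\to a^{+}}V(x)$ taken over the open top of the chain; the colimit over that chain is just the value at the sink, which one checks on $V=M_{\{s\}}$ is not $\Hom(P_{(a},V)$. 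Accordingly, the exactness you need is not exactness of filtered colimits but right-exactness of $\varprojlim$ on pointwise surjections, which holds here because the kernel system consists of finite-dimensional spaces and therefore satisfies Mittag--Leffler. Your instinct that pointwise finite-dimensionality is essential is correct, but it is doing Mittag--Leffler work, not colimit work.

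Second, the concluding case analysis is incomplete. You treat intervals with a $\preceq$-maximum and intervals approaching a single one-sided $\preceq$-supremum, but an interval can have two $\preceq$-incomparable maximal ends: with a single sink at $0$ and sources at $\pm\infty$, the interval $I=(-1,1)$ ascends away from $0$ in both $\R$-directions, and no single $P_a$, $P_{(a}$, or $P_{a)}$ surjects onto $M_I$, so your dichotomy (``either $I$ has a $\preceq$-maximum or it runs toward its supremum along one $\R$-side'') quietly assumes the conclusion. The fix is short: such an $M_I$ admits an epimorphism from a direct sum of two of the listed projectives, one covering each maximal end; if $M_I$ were projective this epimorphism would split, so by Krull--Schmidt $M_I$ would be isomorphic to one of the two indecomposable summands, contradicting the fact that each summand's support is a proper subinterval of $I$. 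With the limit corrected and this case added, your argument goes through and matches the proof in Part (I).
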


These allow us to define the category of finitely-generated representations:
\begin{definition}\label{def:fg reps}
Let $\ARS$ be a continuous quiver of type $A$.
By $\repARS$ we denote the full subcategory of pointwise finite-dimensional representations whose objects are finitely generated by the indecomposable projectives in Theorem \ref{thm:indecomp projs}.
\end{definition}

By \cite[Theorem 3.0.1]{IgusaRockTodorov1}, $\repARS$ is Krull--Schmidt with global dimension 1.

\subsection{Cluster Theories and Embeddings}
In this section we state the cluster theories content we need from Part (III) \cite{IgusaRockTodorov2}.
However, we first need just one result from Part (II).
\begin{proposition}[Proposition 5.1.2 in \cite{Rock}]\label{prop:derived is Krull--Schmidt}
Let $\ARS$ be a continuous quiver of type $A$.
Then $\DbARS$ is Krull--Schmidt.
The indecomposable objects are shifts of indecomposables in $\repARS$.
\end{proposition}

\begin{theorem}[Theorem A in \cite{IgusaRockTodorov2}]\label{thm:derived equivalence}
	$\ARS$ and $A_{\R,R}$ be continuous quivers of type $A$.
	Then $\DbARS$ is triangulated equivalent to $\mathcal D^b(A_{\R,R})$ if and only if (i) $S$ and $R$ are both finite, (ii) $S$ and $R$ both bounded on exactly one side, or (iii) $S$ and $R$ are both indexed by $\Z$.	
\end{theorem}

\begin{definition}\label{def:CAR}
The category $\CARS$ is the orbit category of the doubling of $\DbARS$ via almost-shift as in \cite[Definition 3.1.2]{IgusaRockTodorov2}.
\end{definition}
Importantly, the isomorphism classes of indecomposables in $\CARS$ are the same as if we had took the orbit of $\DbARS$ by shift.
That is, $V\cong V[1]$ for all indecomposables $V$ in $\CARS$.
The doubling process ensures $\CARS$ is a triangulated category.
Thus we have distinguished triangles in $\CARS$ of the form $Q_V\to P_V\to V\to Q_V$ where $Q_V\to P_V\to V\to 0$ is the minimal projective resolution of $V$ in $\repAR$.
Furthermore, for indecomposables $V$ and $W$ in $\CAR$, either $\Hom_{\CARS}(V,W)\cong \kk$ or $\Hom_{\CARS}(V,W)=0$ \cite[Proposition 3.1.2]{IgusaRockTodorov2}.
The authors of \cite{IgusaRockTodorov2} then defined $g$-vectors following J{\o}rgensen and Yakimov in \cite{JorgensenYakimov}.
\begin{definition}\label{def:g vector}
Let $V$ be an indecomposable in $\CAR$.
The \ul{g-vector} of $V$ is the element $[P_V]-[Q_V]$ in $\KsplitCARS$ where $Q_V\to P_V\to V\to 0$ is the minimal projective resolution of $V$ in $\repAR$.
\end{definition}
\begin{definition}\label{def:euler form}
For $[A]=\sum_i m_i[A_i]$ and $[B]=\sum_j n_j[B_j]$ in $\KsplitCARS$, define:
\begin{displaymath} \left\langle [A]\, ,\, [B] \right\rangle := \sum_i \sum_j \langle m_i[A_i]\, ,\, n_j[B_j] \rangle. \end{displaymath}
\end{definition}

The Euler form is used to define $\mathbf E$-compatibility and $\mathbf E$-clusters.
\begin{definition}\label{def:E-cluster}
{~}
\begin{itemize}
\item Let $V$ and $W$ be two indecomposables in $\CARS$ with g-vectors $[P_V]-[Q_V]$ and $[P_W]-[Q_W]$.
We say $\{V,W\}$ is \ul{$\mathbf E$-compatible} if
\begin{displaymath}
\langle [P_V]-[Q_V]\, ,\, [P_W]-[Q_W] \rangle \geq 0 \qquad \text{and} \qquad \langle [P_W]-[Q_W]\, ,\, [P_V]-[Q_V] \rangle \geq 0.
\end{displaymath}

\item A set $T$ is called \ul{$\mathbf E$-compatible} if for every $V,W\in T$ the set $\{V,W\}$ is $\mathbf E$-compatible.
If $T$ is maximally $\mathbf E$-compatible then we call $T$ an \ul{$\mathbf E$-cluster}.

\item Let $T$ be an $\mathbf E$-cluster and $V\in T$ such that there exists $W\notin T$ where $\{V,W\}$ is not $\mathbf E$-compatible but $(T\setminus\{V\})\cup\{W\}$ is $\mathbf E$-compatible.
Then we say $V$ is \ul{$\mathbf E$-mutable}.
The bijection (see Theorem \ref{thm:mutation theorem}) $T\to (T\setminus\{V\})\cup\{W\})$ given by $V\mapsto W$ and $X\mapsto X$ if $X\neq V$ is called an \ul{$\mathbf E$-mutation} or \ul{$\mathbf E$-mutation at $V$}.
\end{itemize}
\end{definition}

The following is used in Section \ref{sec:other orientations of AR}.
\begin{proposition}[Proposition 4.2.4 in \cite{IgusaRockTodorov2}]\label{prop:incompatible rectangles}
Let $V$ and $W$ be indecomposables in $\CARS$ and let $\widetilde{V}$ and $\widetilde{W}$ be the respective indecomposables in $\repARS$.
Then, up to symmetry, $V$ and $W$ are not $\mathbf E$-compatible if and only if there exists an extension $\widetilde{V}\hookrightarrow E \twoheadrightarrow \widetilde{W}$ such that $E\not\cong \widetilde{V}\oplus \widetilde{W}$.
\end{proposition}

The words $\mathbf E$-cluster and $\mathbf E$-mutation are justified with the following theorem.
\begin{theorem}[Theorem 4.3.8 in \cite{IgusaRockTodorov2}]\label{thm:mutation theorem}
Let $T$ be an $\mathbf E$-cluster and $V\in T$ $\mathbf E$-mutable with choice $W$.
Then $(T\setminus \{V\})\cup\{W\}$ is an $\mathbf E$-cluster and any other choice $W'$ for $V$ is isomorphic to $W$.
\end{theorem}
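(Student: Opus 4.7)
The plan is to reduce both the maximality claim and the uniqueness claim to a geometric analysis in the AR-space of $\DbAR$, using the rectangle/triangle correspondence of Theorem \ref{thm:triangles are rectangles}. The guiding intuition is that non-$\mathbf E$-compatibility of a pair $\{V,W\}$ should correspond to the existence of a nontrivial distinguished triangle $V\to E\to W\to$ (or its rotation) witnessed by a rectangle in the AR-space whose two opposite corners are $V$ and $W$ and whose remaining corners are the summands of $E$. Since $T$ is an $\mathbf E$-cluster, any such summands that appear as ``middle corners'' for an exchange pair must themselves be $\mathbf E$-compatible with all of $T$, and hence lie in $T\setminus\{V\}$.

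For maximality of $T':=(T\setminus\{V\})\cup\{W\}$, I would argue as follows. Suppose $U\notin T'$ were $\mathbf E$-compatible with every element of $T'$; the aim is to reach a contradiction. If $U=V$, then in particular $\{V,W\}$ is $\mathbf E$-compatible, contradicting the defining property of the exchange pair. If $U\neq V$, then $U\notin T$, so by maximality of $T$ there is some $X\in T$ with $\{U,X\}$ not $\mathbf E$-compatible. Since $U$ is $\mathbf E$-compatible with all of $T\setminus\{V\}\subset T'$, we must have $X=V$. Now invoke the rectangle picture for $\{U,V\}$: the middle corners of the associated rectangle are forced into $T\setminus\{V\}$ by maximality of $T$ together with the hypothesis on $U$. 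Applying the same analysis to the rectangle associated to $\{V,W\}$ and comparing it to the rectangle associated to $\{U,V\}$ identifies $U$ with $W$ up to isomorphism, since both must occupy the AR-space corner determined by $V$ and the two fixed middle corners. This contradicts $U\notin T'$.

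For the uniqueness statement, suppose $W'$ is another valid choice for the mutation at $V$. Then both $\{V,W\}$ and $\{V,W'\}$ yield rectangles in the AR-space of $\DbAR$ whose remaining corners lie in $T\setminus\{V\}$, and both $W$ and $W'$ are $\mathbf E$-compatible with $T\setminus\{V\}$. The AR-space pins down the fourth corner uniquely: its $\MM^b$-coordinates and position on the diamond are determined by $V$ and the two middle corners. Hence $W\cong W'$ by the isomorphism criterion for indecomposables in $\DbAR$ recalled after Proposition \ref{prop:derived is krull-schmidt}.

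The main obstacle is the rectangle-uniqueness step: showing that, with $V\in T$ fixed, there is at most one (up to isomorphism) indecomposable which is non-$\mathbf E$-compatible with $V$ but $\mathbf E$-compatible with every element of $T\setminus\{V\}$. Concretely, this will require a case analysis on the position of $V$ on the AR-diamond, on whether the relevant rectangle is complete or almost-complete, and on the shapes of the interval indecomposables admissible as middle corners. Once this enumeration is carried out, both the maximality and uniqueness assertions fall out of the same geometric dictionary between nontrivial triangles and rectangles provided by Theorem \ref{thm:triangles are rectangles}.
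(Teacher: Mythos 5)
This statement is not proved in the present paper: it is recalled verbatim from Part (III) of the series (\cite{IgusaRockTodorov2}, Theorem 3.2.8), so there is no in-paper proof to compare your argument against. Judged on its own terms, your proposal has the right geometric dictionary in mind (non-$\mathbf E$-compatibility of a pair corresponds to a rectangle or almost complete rectangle in the AR-space via Proposition \ref{prop:incompatible rectangles}, with the middle corners being the summands of the middle term of the exchange triangle), and your reduction of maximality of $(T\setminus\{V\})\cup\{W\}$ to the uniqueness of the exchange partner is sound: if $U\notin T'$ is compatible with all of $T'$ then indeed $U\neq V$ forces $U\notin T$ and hence $U$ incompatible with $V$ alone, so uniqueness would give $U\cong W\in T'$, a contradiction.

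The genuine gap is that the uniqueness lemma you isolate --- with $V\in T$ fixed, there is at most one indecomposable, up to isomorphism, that is non-$\mathbf E$-compatible with $V$ but $\mathbf E$-compatible with every element of $T\setminus\{V\}$ --- is not an auxiliary step: it \emph{is} the content of the theorem, and you explicitly defer it to a case analysis that is never carried out. Moreover, the intermediate assertions you lean on in the meantime are themselves unjustified and of comparable difficulty. You claim the middle corners of the rectangle attached to $\{U,V\}$ are ``forced into $T\setminus\{V\}$ by maximality of $T$ together with the hypothesis on $U$,'' but maximality of $T$ only says that an object compatible with \emph{all} of $T$ lies in $T$; showing the middle corners are compatible with all of $T$ requires exactly the kind of interval-endpoint analysis you are postponing. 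Likewise, the step ``comparing it to the rectangle associated to $\{V,W\}$ identifies $U$ with $W$'' silently assumes the two rectangles share their middle corners, which does not follow from anything stated: a priori there can be many rectangles in the AR-space having $V$ as a corner and two further corners in $T$, each determining a different fourth corner, and one must rule out all but one of them producing a fourth corner compatible with $T\setminus\{V\}$. (One must also track that Proposition \ref{prop:incompatible rectangles} allows $V$ to be either the left or the right corner, and that the rectangle may be almost complete, with only one middle corner.) As written, the proposal is a plausible outline whose essential content is still missing.
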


The key difference between $\mathbf E$-clusters and the usual cluster structures (such as those in \cite{BIRS}) is that not all $V$ in an $\mathbf E$-cluster $T$ need be mutable.
The authors only require there be none or one choice.
This is generalized to the abstract notion of cluster theories.

\begin{definition}[Definition 4.1.1 in \cite{IgusaRockTodorov2}]\label{def:cluster theory}
Let $\mathcal C$ be a Krull--Schmidt additive category and $\mathbf P$ a pairwise compatibility condition on its (isomorphism classes of) indecomposable objects.
Suppose that for each (isomorphism class of) indecomposable $X$ in a maximally $\mathbf P$-compatible set $T$ there exists none or one (isomorphism class of) indecomposable $Y$ such that $\{X,Y\}$ is not $\mathbf P$-compatible but $(T\setminus\{X\})\cup\{Y\}$ is $\mathbf P$-compatible. Then
\begin{itemize}
\item We call the maximally $\mathbf P$-compatible sets \ul{$\mathbf P$-clusters}.
\item We call a function of the form $\mu:T\to (T\setminus\{X\})\cup\{Y\}$ such that $\mu Z=Z$ when $Z\neq X$ and $\mu X=Y$ a \ul{$\mathbf P$-mutation} or \ul{$\mathbf P$-mutation at $X$}.
\item If there exists a $\mathbf P$-mutation $\mu:T\to (T\setminus\{X\})\cup\{Y\}$ we say $X\in T$ is \ul{$\mathbf P$-mutable}.
\item The subcategory $\mathscr T_{\mathbf P}(\mathcal C)$ of $\mathcal S\text{et}$ whose objects are $\mathbf P$-clusters and whose morphisms are generated by $\mathbf P$-mutations (and identity functions) is called \ul{the $\mathbf P$-cluster theory of $\mathcal C$}.
\item The functor $I_{\mathbf P,\mathcal C}:\mathscr T_{\mathbf P}(\mathcal C)\to \mathcal S\text{et}$ is the inclusion of the subcategory.
\end{itemize}
\end{definition}

\begin{remark}
We note three things immediately about Definition \ref{def:cluster theory}.
	\begin{itemize}
	\item The set $(T\setminus \{X\}) \cup \{Y\}$ must be maximally $\mathbf P$-compatible, so this does not need to be checked in practice.
	\item Since $\mathbf P$-clusters contain isomorphism classes of indecomposables as elements and $\mathcal C$ is skeletally small, the category $\mathscr{T}_{\mathbf P}(\mathcal C)$ is small.
	\item Finally, the pairwise compatibility condition $\mathbf P$ determines the cluster theory.
	\end{itemize}
\end{remark}

Thus we may say that $\mathbf P$ induces the cluster theory.
\begin{definition}[Definition 4.1.4 in \cite{IgusaRockTodorov2}]\label{def:tilting cluster theory}
Let $\mathcal C$ be a Krull--Schmidt category and $\mathbf P$ a pairwise compatibility condition such that $\mathbf P$ induces the $\mathbf P$-cluster theory of $\mathcal C$.
If, for every $\mathbf P$-cluster $T$ and $X\in T$, there is a $\mathbf P$-mutation at $X$ then we call $\mathscr T_{\mathbf P}(\mathcal C)$ the \ul{tilting $\mathbf P$-cluster theory}.
\end{definition}

Every cluster structure in the sense of \cite{BMRRT,BIRS} yields an tilting cluster theory.

\begin{definition}[Definition 4.1.9 in \cite{IgusaRockTodorov2}]\label{def:cluster embedding}
Let $\mathcal C$ and $\mathcal D$ be two Krull--Schmidt categories with respective pairwise compatibility conditions $\mathbf P$ and $\mathbf Q$.
Suppose these compatibility conditions induce the $\mathbf P$-cluster theory and $\mathbf Q$-cluster theory of $\mathcal C$ and $\mathcal D$, respectively.

Suppose there exists a functor $F:\mathscr T_{\mathbf P}(\mathcal C) \to \mathscr T_{\mathbf Q}(\mathcal D)$ such that $F$ is an injection on objects and an injection from $\mathbf P$-mutations to $\mathbf Q$-mutations.
Suppose also there is a natural transformation $\eta: I_{\mathbf P,\mathcal C} \to  I_{\mathbf Q, \mathcal D}\circ F$ whose morphisms $\eta_T: I_{\mathbf P,\mathcal C}(T) \to I_{\mathbf Q, \mathcal D}\circ F(T)$ are all injections.
Then we call $(F,\eta):\mathscr T_{\mathbf P}(\mathcal C) \to \mathscr T_{\mathbf Q}(\mathcal D)$ an \ul{embedding of cluster theories}.
\end{definition}

\section{Geometric Models of $\mathbf E$-clusters}\label{sec:geometric models of E-clusters}
In this section we construct geometric models of $\mathbf E$-clusters. 
In Section \ref{sec:straight AR} we address the straight descending orientation of $\AR$ and in Section \ref{sec:other orientations of AR} we address the rest of the orientations.
See \cite{Rock2} for a more general version of this technique.
We discuss the classification of cluster theories of continuous type $A$ in Section \ref{sec:cluster theory classification}.

Recall that an isomorphism of categories $F:\mathcal C\to \mathcal D$ has an inverse functor $G:\mathcal D\to \mathcal C$ such that $GF = 1_{\mathcal C}$ and $FG = 1_{\mathcal D}$; the compositions are \emph{equal} to the identity.

\begin{definition}\label{def:equivalence of cluster theories}
Let $\mathcal C$ and $\mathcal D$ be a Krull--Schmidt categories.
Let $\mathbf P$ and $\mathbf Q$ be pairwise compatibility conditions in $\mathcal C$ and $\mathcal D$ such that they, respectively, induce the cluster theories $\mathscr T_{\mathbf P}(\mathcal C)$ and $\mathscr T_{\mathbf Q}(\mathcal D)$.
A \ul{weak equivalence of cluster theories} is an embedding of cluster theories $(F,\eta):\mathscr T_{\mathbf P}(\mathcal C)\to\mathscr T_{\mathbf Q}(\mathcal D)$ such that $F$ is an isomorphism of categories.
We instead say $(F,\eta)$ is an \ul{isomorphism of cluster theories} if additionally each $\eta_T$ is an isomorphism.
\end{definition}

\begin{remark}\label{rmk:yes its strict}
An isomorphism of categories is ordinarily a very stringent requirement.
However, since every cluster theory is a groupoid the only real control we really have over comparing the ``size'' of each category is to insist they be identically the same via an isomorphism on objects.
And, since clusters in a cluster theory are sets of \emph{isomorphism classes of} objects in $\mathcal C$ and $\mathcal D$, respectively, we are already accounting for the type of equivalence with which we are familiar.
\end{remark}

We use the following lemma in Sections \ref{sec:straight AR} and \ref{sec:other orientations of AR}.
\begin{lemma}\label{lem:bijection and same compatibility gives equivalence}
Let $\mathcal C$ and $\mathcal D$ be Krull--Schmidt categories.
Let $\mathbf P$ be a pairwise compatibility condition in $\mathcal C$ such that $\mathbf P$ induces the cluster theory $\mathscr T_{\mathbf P}(\mathcal C)$ and let $\mathbf Q$ be a pairwise compatibility condition in $\mathcal D$.
Suppose
\begin{itemize}
\item there is a bijection $\Phi:\Ind(\mathcal C)\to \Ind(\mathcal D)$ and
\item for indecomposables $A$ and $B$ in $\mathcal C$, $\{A,B\}$ is $\mathbf P$-compatible if and only if $\{\Phi (A), \Phi(B)\}$ is $\mathbf Q$-compatible.
\end{itemize}
Then $\mathbf Q$ induces the cluster theory $\mathscr T_{\mathbf Q}(\mathcal D)$ and $\Phi$ induces an isomorphism of cluster theories $(F,\eta):\mathscr T_{\mathbf Q}(\mathcal D)\to \mathscr T_{\mathbf P}(\mathcal C)$.
\end{lemma}
\begin{proof}
Let $T$ be a maximally $\mathbf Q$-compatible set of $\mathcal D$-indecomposables and let $F(T) = \{\Phi^{-1}(A) \mid A\in T\}$.
First we show $F(T)$ is an $\mathbf P$-cluster.
Suppose $\{X\}\cup F(T)$ is $\mathbf P$-compatible.
Then $\{\Phi(X)\}\cup T$ is $\mathbf Q$-compatible.
However, $T$ is maximally $\mathbf Q$-compatible and so $\Phi(X)\in T$ and $X\in F(T)$.

Suppose there is $A\in T$ and $B\notin T$ such that $(T \setminus\{A\})\cup\{B\}$ is $\mathbf Q$-compatible.
Then $\{A,B\}$ is not $\mathbf Q$-compatible since $T$ is maximally $\mathbf Q$-compatible.
So $\{\Phi^{-1}(A),\Phi^{-1}(B)\}$ is not $\mathbf P$-compatible but $(F(T)\setminus\{\Phi^{-1}(A)\})\cup \{\Phi^{-1}(B)\}$ is $\mathbf P$-compatible.
This is a $\mathbf P$-mutation and so $(F(T)\setminus\{\Phi^{-1}(A)\})\cup \{\Phi^{-1}(B)\}$ is a $\mathbf P$-cluster.
Then by a similar argument to beginning of this proof, $(T\setminus\{A\})\cup \{B\}$ is maximally $\mathbf Q$-compatible.
Suppose there is $C\notin T$ such that $(T\setminus\{A\})\cup \{C\}$ is $\mathbf Q$-compatible.
Again, $\{A,C\}$ is not $\mathbf Q$-compatible and $(T\setminus\{A\})\cup \{C\}$ is maximally $\mathbf Q$-compatible.
However, this means $\Phi^{-1}(B)=\Phi^{-1}(C)$ and so $C=B$.
Therefore, $\mathbf Q$ induces the cluster theory $\mathscr T_{\mathbf Q}(\mathcal D)$.

We have already shown $F$ is a functor.
Suppose $T \neq T'$.
Then $T \cap T'\subsetneq T$ and $T \cap T'\subsetneq T'$
Using $\Phi^{-1}$ we see $F(T) \cap F(T')\subsetneq F(T)$ and $F(T) \cap F(T')\subsetneq F(T')$ which means $F(T)\neq F(T')$.
Suppose $L$ is a $\mathbf P$-cluster.
Then $\{\Phi(X) \mid X\in L\}$ is a $\mathbf Q$-cluster by a similar argument to that at the beginning of the proof.
Therefore, $F$ is an isomorphism of categories.
Finally, for each $\mathbf Q$-cluster $T$, we define $\eta_{T}:T\to F(T)$ by $A\mapsto \Phi^{-1}(A)$.
These are isomorphisms, as desired.
\end{proof}

\subsection{Straight orientaion: $\AR$}\label{sec:straight AR}
In this section we construct a geometric model of the cluster theory $\mathscr T_{\mathbf E}(\CAR)$ when $\AR$ has the straight descending orientation.
With this orientaiton there is a single frozen indecomposable in every $\mathbf E$-cluster (Definition \ref{def:E-cluster}): $P_{+\infty}$.
The geometric model of $\mathbf E$-clusters of this orientation is a generalization of the models in \cite{HolmJorgensen, BaurGraz}.
The generic arc in \cite{BaurGraz} is very similar to $P_{+\infty}$.

It is straightforward to check that for $M_{|a,b|}$ and $M_{|c,d|}$ where $a,b,c,d$ are all distinct the set $\{M_{|a,b|},M_{|c,d|}\}$ is not $\mathbf E$-compatible if and only if $a<c<b<d$ or $c<a<d<b$.
If $a<c<b<d$ we can draw the crossing arcs from $a$ to $b$ and from $c$ to $d$, for the ``macroscopic'' perspective, in Figure \ref{fig:macroscopic crossing}, both of which are always $\mathbf E$-compatible with $P_{+\infty}=M_{(-\infty,+\infty)}$, .
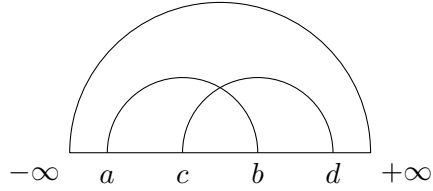
\begin{figure}
\centering
\begin{tikzpicture}[scale = 2]
\draw (0,0) -- (2,0) arc (0:180:1);
\draw (0.25,0) arc (180:0:.5);
\draw (1.75,0) arc (0:180:0.5);
\draw (0.25,-.05) node[anchor=north] {$a$};
\draw (1.25,0) node[anchor=north] {$b$};
\draw (0.75,-.05) node[anchor=north] {$c$};
\draw (1.75,0) node[anchor=north] {$d$};
\draw (0,0) node[anchor=north east] {$-\infty$};
\draw (2,0) node[anchor=north west] {$+\infty$};
\end{tikzpicture}
\caption{Schematic of crossing on the ``macroscopic'' scale.}\label{fig:macroscopic crossing}
\end{figure}

However, on the ``microscopic'' scale things are different.
Because we allow all types of intervals, we need two possible arc endpoints per $x\in\R$, but only one endpoint at each $-\infty$ and $+\infty$.
\begin{definition}\label{def:endpoints}
Let $\AR$ have the straight descending orientation.
In the set $\pmset$ we consider ${-} < {+}$ and denote an arbitrary element by $\e$, $\e'$, etc.
We give the set $\mathcal E:= (\R\times\pmset)\cup\{\pm\infty\}$ the total ordering where
\begin{itemize}
\item  $-\infty < (x,\pm) < +\infty$ for all $x\in \R$ and
\item $(x,\e) < (y,\e')$ if either $x<y$ or $x=y$ and $\e<\e'$.
\end{itemize}
For ease of notation we write $(-\infty,+)$ for $-\infty$ and $(+\infty,-)$ for $+\infty$.
We also write $\overline{a}$ to mean $(a,\e)$ for arbitrary $\e\in \{-,+\}$.
\end{definition}

\begin{definition}\label{def:straight arc}\label{def:arc indecomposable bijection}
Let $\mathcal A$ be the set $\{(\overline{a},\overline{b})\in \mathcal E\times \mathcal E \mid \overline{a}<\overline{b}\}$.
We call $\mathcal A$ the set of \ul{arcs}.
Let $M_{|a,b|}$ be the indecomposable in $\CAR$ that is the image of the indecomposable with the same name in $\repAR$.
We define $\Phi(M_{|a,b|})$ to be $(\overline{a},\overline{b})$ where
\begin{itemize}
\item $\overline{a}=(a,-)$ if $a\in |a,b|$ and $\overline{a}=(a,+)$ if $a\notin |a,b|$, and
\item $\overline{b}=(b,-)$ if $b\notin |a,b|$ and $\overline{b}=(b,+)$ if $b\in |a,b|$.
\end{itemize}
This defines $\Phi:\Ind(\mathcal C(\AR))\to \mathcal A$.
Note $\Phi(M_{[a,a]}) =((a,-),(a,+))$.
\end{definition}

We impose the following rule on our arcs.
\begin{rulez}\label{rule:easy cross}
Define a crossing function $\mathfrak c:\mathcal A\times\mathcal A\to \{0,1\}$.
\begin{equation}\label{eqn:easy cross}
	\mathfrak {c} (\alpha,\beta) = \begin{cases}
 		1 & \overline{a} <\overline{c}\leq \overline{b} < \overline{d} \text{ or } \overline{c} < \overline{a}\leq \overline{d} < \overline{b} \\
 		0 & \text{otherwise.}
	\end{cases}\tag{\ref{rule:easy cross}}
\end{equation}
If $\alpha\neq \beta$ and $\mathfrak c(\alpha,\beta)=1$ we say $\alpha$ and $\beta$ \ul{cross}.
Otherwise, we say $\alpha$ and $\beta$ \ul{do not cross}.
\end{rulez}

\noindent \textbf{!!} Notice the difference from the usual convention in the middle.
If two arcs meet from opposing sides we still consider them to cross.
This only happens on the ``microscopic'' scale.
I.e., for $a<b<d$, $(\overline{a},(b,-))$ and $((b,+),\overline{d})$ do not cross but any other combination of $+$ and $-$ for $\overline{b}$ cross (see Figure \ref{fig:microscopic crossing}).
\begin{figure}
\centering
\begin{tikzpicture}
\foreach \x in {0,...,3}
{
	\filldraw (3*\x,0) circle[radius=.4mm];
	\filldraw (3*\x + 0.5,0) circle[radius=.4mm];
}
\foreach \x in {0,3}
	\draw (\x,0) arc(0:70:1);

\foreach \x in {6,9}
	\draw (\x+0.5,0) arc(0:70:1);

\foreach \x in {3,9}
	\draw (\x,0) arc (180:110:1);

\foreach \x in {0,6}
	\draw (\x+0.5,0) arc (180:110:1);

\draw (0.25,1) node[anchor=south] {$-  +$};
\draw (3.25,1) node[anchor=south] {$-  -$};
\draw (6.25,1) node[anchor=south] {$+  +$};
\draw (9.25,1) node[anchor=south] {$+  -$};

\draw (0.25,0) node[anchor=north] {not};
\draw (0.25,-0.35) node[anchor=north] {crossing};

\foreach \x in {3,6,9}
	\draw (\x+0.25,-.15) node[anchor=north] {crossing};
\end{tikzpicture}
\caption{Possibilities for crossing and not crossing on the ``microscopic'' scale.}\label{fig:microscopic crossing}
\end{figure}
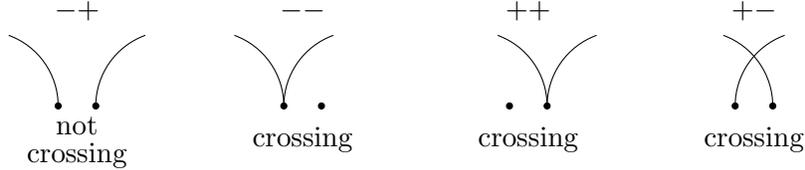

For the following proposition, recall the Definition~\ref{def:g vector}.

\begin{proposition}\label{prop:bijection of arcs and indecomposables}
The map $\Phi$ in Definition \ref{def:arc indecomposable bijection} is a bijection.
\end{proposition}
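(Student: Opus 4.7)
The plan is to construct an explicit two-sided inverse for $\Phi$. By Theorem \ref{thm:GeneralizedBarCode} together with Proposition \ref{prop:derived is krull-schmidt} and the identification $V\cong V[1]$ in $\CAR$ recorded just after Definition \ref{def:CAR}, every isomorphism class of indecomposable in $\CAR$ is represented by exactly one interval indecomposable $M_{|a,b|}$, with two classes agreeing if and only if the underlying intervals are literally equal. So it suffices to check that Definition \ref{def:straight arc}, read as a map from the intervals of $\R$ (including singletons and intervals with one or two infinite endpoints) to arcs with endpoints in $\mathcal E$, is a bijection.

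For injectivity I would argue by reading off coordinates: if $\Phi(M_I)=\Phi(M_J)=(\overline{a},\overline{b})$, then the first coordinates of $\overline{a}$ and $\overline{b}$ are the endpoints of both $I$ and $J$ in $\R\cup\{\pm\infty\}$, and the signs in $\pmset$ encode, by construction, which finite endpoints are included in each interval. Hence $I=J$ and the corresponding indecomposables are isomorphic.

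For surjectivity I would specify the inverse endpoint-by-endpoint. Given an arc $(\overline{a},\overline{b})$ with $\overline{a}<\overline{b}$, let $a$ and $b$ be the first coordinates of $\overline{a}$ and $\overline{b}$ respectively. If $a=-\infty$ then by the conventions of Definition \ref{def:endpoints} the left end is forced open; otherwise $\overline{a}=(a,\e_a)$ and we decree $a\in I$ iff $\e_a=-$. Dually for the right endpoint, including $b\in I$ iff $\e_b=+$. The only edge case is $a=b$ finite: the inequality $\overline{a}<\overline{b}$ then forces $\overline{a}=(a,-)$ and $\overline{b}=(a,+)$, and we take $I=\{a\}$, which Definition \ref{def:straight arc} explicitly accommodates. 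A direct comparison with Definition \ref{def:straight arc} confirms $\Phi(M_I)=(\overline{a},\overline{b})$.

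I do not anticipate any real obstacle. The substantive content has already been absorbed into Theorem \ref{thm:GeneralizedBarCode} (classifying indecomposables) and into the careful setup of Definitions \ref{def:endpoints} and \ref{def:straight arc}; what remains is the one-to-one pairing of the four open/closed patterns at a finite endpoint with the two signs in $\pmset$, together with the forced sign conventions at $\pm\infty$ and the singleton case.
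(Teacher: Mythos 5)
Your proposal is correct and follows essentially the same route as the paper's own proof: both arguments verify injectivity by observing that any difference between intervals $|a,b|$ and $|c,d|$ (in an endpoint value or in whether a finite endpoint is included) forces a difference in the associated arcs, and both verify surjectivity by reading the interval back off an arbitrary arc via the sign conventions of Definition \ref{def:straight arc}. Your explicit handling of the $\pm\infty$ and singleton edge cases, and the preliminary reduction through Theorem \ref{thm:GeneralizedBarCode}, only make explicit what the paper's proof leaves implicit.
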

\begin{proof}
Suppose $M_{|a,b|}\not\cong M_{|c,d|}$.
Then $|a,b|\neq |c,d|$ and so one of the endpoints of the intervals must differ.
I.e., even if $a=c$ and $b=d$ then $a\notin |a,b|$ or $a\notin |c,d|$ or $b\notin |a,b|$ or $b\notin |c,d|$.
Then endpoints of the arcs associated to $M_{|a,b|}$ and $M_{|c,d|}$ are different.
Let $\alpha=(\overline{a},\overline{b})$ be an arc.
Then $\alpha=\Phi(M_{|a,b|})$ where $a\in |a,b|$ if and only if $\bar{a}=(a,-)$ and $b\in|a,b|$ if and only if $\overline{b}=(b,+)$.
Therefore $\Phi$ is both injective and surjective and so bijective.
\end{proof}

\begin{lemma}\label{lem:crossing = incompatible}
Let $M_{|a,b|}\neq M_{|c,d|}$ be indecomposables in $\CAR$, $\alpha=\Phi(M_{|a,b|})$, and $\beta=\Phi(M_{|c,d|})$.
Then $\{M_{|a,b|},M_{|c,d|}\}$ is $\mathbf E$-compatible if and only if $\mathfrak{c}(\alpha,\beta)=0$.
\end{lemma}
\begin{proof}
Suppose $\{M_{|a,b|}, M_{|c,d|}\}$ is not $\mathbf E$-compatible.
As we have discussed, if $a,b,c,d$ are all distinct then $a<c<b<d$ or $c<a<d<b$.
In either case it follows that $\alpha$ and $\beta$ cross.
Suppose $a=c$.
Since the $g$-vectors of $M_{|a,b|}$ and $M_{|c,d|}$ are not $\mathbf E$-compatible, (Definition~\ref{def:E-cluster}), we must have $a\notin |a,b|$ and $c\in |c,d|$ or vice versa.

Without loss of generality suppose $a\notin |a,b|$ and $c\in |c,d|$.
Then either $d < b$ or if $d=b$ then $d\notin |c,d|$ and $b\in |a,b|$.
In either case the arcs $\alpha$ and $\beta$ cross.
We can perform a similar argument starting with $b=d$ and see that $\alpha$ and $\beta$ cross.

Now suppose $\alpha$ and $\beta$ cross.
Then $\overline{a} <\overline{c}\leq \overline{b} < \overline{d}$ or $\overline{c} < \overline{a}\leq \overline{d} < \overline{b}$.
Without loss of generality assume the first.
Then if $a=c$, $a\in |a,b|$ but $c\notin |c,d|$.
Similarly if $b=d$ then $b\notin |a,b|$ and $d\in |c,d|$.
In all cases we see that the $g$-vectors of $M_{|a,b|}$ and $M_{|c,d|}$ are not $\mathbf E$-compatible and so the set $\{M_{|a,b|}, M_{|c,d|}\}$ is not $\mathbf E$-compatible.
\end{proof}

\begin{definition}\label{def:CR}
Let $\CsAR$ be the additive category whose indecomposable objects are $\mathcal A$.
We define $\Hom_{\CsAR}(\alpha,\beta)$ and composition $\alpha\stackrel{f}{\to}\beta\stackrel{g}{\to}\gamma$, for $f$ and $g$ nonzero, by
\begin{displaymath}
	\Hom_{\CsAR}(\alpha,\beta) = \begin{cases}
 		\kk & \mathfrak c(\alpha,\beta)=1 \\
 		0 & \text{otherwise}.	
 	\end{cases}
 	\qquad\quad
 	g\circ f = \begin{cases}
 		g\cdot f\in \kk & \alpha=\beta \text{ or } \beta=\gamma \\
 		0 & \text{otherwise}.
 	\end{cases}
\end{displaymath}
Extending bilinearly, we have a Krull--Schmidt category.
For $\alpha\neq\beta$, we define $\{\alpha,\beta\}$ to be $\mathbf N_\R$-compatible if and only if $\mathfrak c(\alpha,\beta)=0$.
\end{definition}



\begin{cor}[to Lemma \ref{lem:crossing = incompatible}]\label{cor:crossing = compatible}
Let $M_{|a,b|}$ and $M_{|c,d|}$ be in $\Ind(\CAR)$, $\alpha=\Phi(M_{|a,b|})$, and $\beta=\Phi(M_{|c,d|})$.
Then $\{\alpha,\beta\}$ is $\NRclosed$-compatible if and only if $\{M_{|a,b|},M_{|c,d|}\}$ is $\mathbf E$-compatible.
\end{cor}
\begin{proof}
The Corollary is immediately true if $\alpha=\beta$ or $M_{|a,b|}= M_{|c,d|}$.
Suppose $\alpha\neq\beta$.
Then $\{\alpha,\beta\}$ is $\NRclosed$-compatible if and only if $\mathfrak{c}(\alpha,\beta)=0$.
Now apply Lemma \ref{lem:crossing = incompatible}.
\end{proof}

\begin{theorem}\label{thm:geometric model}
The pairwise compatibility condition $\NRclosed$ induces the $\NRclosed$-cluster theory of $\CsAR$ and $\Phi$ induces the isomorphism of cluster theories $(F,\eta):\mathscr T_{\NRclosed}(\CsAR)\to \mathscr T_{\mathbf E}(\CAR)$.
\end{theorem}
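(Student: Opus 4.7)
The plan is to apply Lemma \ref{lem:bijection and same compatibility gives equivalence} directly, with $\mathcal C = \CAR$, $\mathbf P = \mathbf E$, $\mathcal D = \CsAR$, $\mathbf Q = \NRclosed$, and bijection $\Phi:\Ind(\CAR)\to\Ind(\CsAR)$ from Definition \ref{def:arc indecomposable bijection}. All three of its hypotheses are already on the table: that $\mathbf E$ induces the $\mathbf E$-cluster theory of $\CAR$ (Theorem \ref{thm:mutation theorem} and Example \ref{xmp:IRT model}), that $\Phi$ is a bijection (Proposition \ref{prop:bijection of arcs and indecomposables}), and that $\{M_{|a,b|},M_{|c,d|}\}$ is $\mathbf E$-compatible if and only if $\{\Phi(M_{|a,b|}),\Phi(M_{|c,d|})\}$ is $\NRclosed$-compatible (Corollary \ref{cor:crossing = compatible}).

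First I would verify that the three hypotheses of Lemma \ref{lem:bijection and same compatibility gives equivalence} hold in the stated generality by quoting the three results above. Then the lemma simultaneously delivers both conclusions of the theorem: that $\NRclosed$ induces the $\NRclosed$-cluster theory of $\CsAR$ (each arc in a maximally $\NRclosed$-compatible set has none or one replacement), and that $\Phi$ induces an isomorphism of cluster theories $(F,\eta):\mathscr T_{\NRclosed}(\CsAR)\to\mathscr T_{\mathbf E}(\CAR)$, where $F$ sends an $\NRclosed$-cluster $T$ to the $\mathbf E$-cluster $\{\Phi^{-1}(\theta):\theta\in T\}$ and $\eta_T(\theta)=\Phi^{-1}(\theta)$. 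One should then also note that each $\eta_T$ is a bijection of sets (again because $\Phi$ is), so $(F,\eta)$ is not merely a weak equivalence but an isomorphism in the sense of Definition \ref{def:equivalence of cluster theories}.

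There is no real obstacle remaining at this point, because the substantive work has been front-loaded into the supporting framework. The crucial input is Lemma \ref{lem:crossing = incompatible}, whose boundary cases $a=c$ and $b=d$ force the microscopic convention of Rule \ref{rule:easy cross}: two arcs meeting at a shared real coordinate from opposite microscopic sides must be declared to cross precisely when the corresponding $g$-vectors in $\CAR$ fail to be $\mathbf E$-compatible. Once that rule, the endpoint set $\mathcal E$, and Corollary \ref{cor:crossing = compatible} are in place, the theorem reduces to a direct invocation of Lemma \ref{lem:bijection and same compatibility gives equivalence}.
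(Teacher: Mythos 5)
Your proposal is correct and follows essentially the same route as the paper: both verify that $\Phi$ is a bijection (Proposition \ref{prop:bijection of arcs and indecomposables}), that compatibility is preserved and reflected (Corollary \ref{cor:crossing = compatible}), and then invoke Lemma \ref{lem:bijection and same compatibility gives equivalence} to obtain both conclusions at once, with $F T_{\NRclosed} = \{\Phi^{-1}(\theta):\theta\in T_{\NRclosed}\}$ and $\eta_{T_{\NRclosed}}(\theta)=\Phi^{-1}(\theta)$. Your identification of the roles $\mathcal C=\CAR$, $\mathbf P=\mathbf E$, $\mathcal D=\CsAR$, $\mathbf Q=\NRclosed$ matches the direction of the isomorphism in the lemma's conclusion, so nothing further is needed.
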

\begin{proof}
We have shown there is a bijection $\Phi:\Ind(\CAR)~\to~\Ind(\CsAR)$ (Proposition \ref{prop:bijection of arcs and indecomposables}) and that $\{M_{|a,b|},M_{|c,d|}\}$ is $\mathbf E$-compatible if and only if $\{\Phi(M_{|a,b|}),  \Phi(M_{|c,d|})\}$ is $\NRclosed$-compatible (Corollary \ref{cor:crossing = compatible}).
By Lemma \ref{lem:bijection and same compatibility gives equivalence}, $\NRclosed$ induces the cluster theory $\mathscr T_{\NRclosed}(\CsAR)$ and we have the isomorphism of cluster theories given by $F(T) := \{\Phi^{-1}(\alpha) \mid \alpha\in T \}$ and $\eta_T(\alpha) := \Phi^{-1}(\alpha)$.
\end{proof}

If we remove the arc $((-\infty,+),(+\infty,-))$ from our geometric model, we still have a weak equivalence of cluster theories.

\subsection{Other orientations}\label{sec:other orientations of AR}
We now construct a geometric model of $\mathscr{T}_{\mathbf E}(\CARS)$ for orientations of $\ARS$ other than the straight orientation.
We take inspiration from the model of representations in \cite{BGMS}.
In the case of straight $\ARS$, we think of all the arcs as being directed: originating at the lower point and ending at the upper point.
We update our pictures from Rule \ref{rule:easy cross} to those in Figure \ref{fig:new microscopic crossing}.
When $\ARS$ has the straight descending orientation, these are the only possibilities.
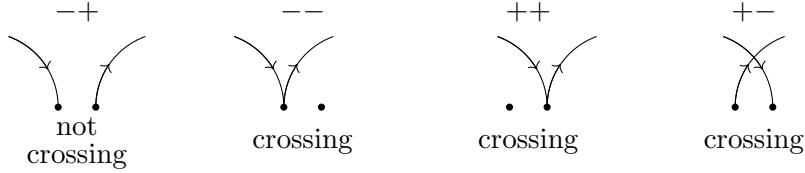
\begin{figure}
\centering
\begin{tikzpicture}
\foreach \x in {0,...,3}
{
	\filldraw (3*\x,0) circle[radius=.4mm];
	\filldraw (3*\x + 0.5,0) circle[radius=.4mm];
}
\foreach \x in {0,3}
{
	\draw (\x,0) arc(0:70:1);
	\draw[->] (\x-.658,.939) arc (70:30:1);
}

\foreach \x in {6,9}
{
	\draw (\x+0.5,0) arc(0:70:1);
	\draw[->] (\x-.158,.939) arc (70:30:1);
}

\foreach \x in {3,9}
{
	\draw (\x,0) arc (180:110:1);
	\draw[->] (\x,0) arc (180:145:1);
}

\foreach \x in {0,6}
{
	\draw (\x+0.5,0) arc (180:110:1);
	\draw[->] (\x+0.5,0) arc (180:145:1);
}

\draw (0.25,1) node[anchor=south] {$-  +$};
\draw (3.25,1) node[anchor=south] {$-  -$};
\draw (6.25,1) node[anchor=south] {$+  +$};
\draw (9.25,1) node[anchor=south] {$+  -$};

\draw (0.25,0) node[anchor=north] {not};
\draw (0.25,-0.35) node[anchor=north] {crossing};

\foreach \x in {3,6,9}
	\draw (\x+0.25,-.15) node[anchor=north] {crossing};
\end{tikzpicture}
\caption{Four of the possibilities for crossing and not crossing on the ``microscopic'' scale for all orientations of $\ARS$, using directed arcs.}\label{fig:new microscopic crossing}
\end{figure}

Now suppose $\ARS$ has some orientation other than straight descending or straight ascending.
We construct a set of endpoints $\mathcal E$ as the union of two sets: $\Edown$ and $\Eup$.
Recall in the definition of a continuous quiver of type $A$ (Definition \ref{def:AR}) that sinks have even index, $s_{2n}$, and sources have odd index, $s_{2n+1}$.
Recall also that if the sinks and sources of $\ARS$ are bounded below then $-\infty$ is assigned the next available index below and similarly for $+\infty$ when the sinks and sources are bounded above.
When the sinks and sources are not bounded below (above) we assign the index $-\infty$ to $-\infty$ ($+\infty$ to $+\infty$).

Recall $-<+$ in $\pmset$ and we write $\e$ to mean an arbitrary element in $\{\pmset\}$.
\begin{definition}\label{def:EAR}
The sets $\Edown$ and $\Eup$ are defined as follows, where each $s_m$ in the notation is a sink or source in $\ARS$ or one of $\pm \infty$ where appropriate:
\begin{align*}
\Edown := & \left( \{ x\in \R \mid \exists \text{ a sink and source } s_{2m} < x < s_{2m+1} \} \cup\{ (s_{2n-1},s_{2n}) \} \right) \times \pmset \\
\Eup := & \left( \{ x\in\R \mid \exists \text{ a source and sink } s_{2m-1} < x < s_{2m} \} \cup \{ (s_{2n}, s_{2n+1}) \} \right) \times \pmset
\end{align*}
We write $\bar{a}$ to mean an element $(a,\e)$ when $\e$ is unknown or understood from context. In this case $a$ may be in $\R$ or equal to some $(s_m,s_{m+1})$.
We define a total order on $\mathcal E:= \Edown\cup\Eup$:
\begin{itemize}
\item We say $(x,\e)<(y,\e')$ if $ x<y$ or $x=y$ and $\e<\e'$.
\item We say $((s_m,s_{m+1}),\e) < ((s_n,s_{n+1}),\e')$ if  $s_m<s_n$ or $s_m=s_n$ and $\e<\e'$.
\item We say $(x,\e) < ((s_m,s_{m+1}),\e')$ if $ x < s_m$ or if $s_m<x<s_{m+1}$ and $\e'=+$.
\item We say $((s_m,s_{m+1}),\e) < (y,\e')$ if $s_{m+1}<y$ or if $s_m<y<s_{m+1}$ and $\e=-$.
\end{itemize}
The set $\Edown$ has a maximal (respectively minimal) element if and only if $+\infty$ has an odd index (respectively $-\infty$ has an even index).
Dually, $\Eup$ has a maximal (respectively minimal) element if and only if $+\infty$ has an even index (respectively $-\infty$ has an odd index).
\end{definition}

\begin{definition}\label{def:arc}
Let $\mathcal A$ be the set $\{(\overline{a},\overline{b})\in\mathcal E\times\mathcal E \mid \overline{a}<\overline{b}\}$.
We call $\mathcal A$ the set of \ul{arcs}.
For an element $(\overline{a},\overline{b})\in\mathcal A$, we call $\overline{a}$ and $\overline{b}$ the \ul{endpoints}.
\end{definition}

\begin{example}\label{xmp:running xmp}
Let $\ARS$ have sinks $s_{-2}=-2, s_0=0,s_2=2$ and sources $s_{-1}=-1, s_1=1$.
Then $-\infty = s_{-3}$ and $+\infty = s_{3}$.
The set $\Edown$ has a maximum element and $\Eup$ has a minimum element.
In Figure \ref{fig:theta arc} we draw $\Edown$ and $\Eup$ using piece-wise linear curves in the plane and draw arcs on the ``macroscopic'' scale as lines between two points in $\mathcal E$.
For example, let $\alpha = (\overline{a},\overline{b})$ where $s_{-2}<a<s_{-1}$ and $s_1<b<s_2$.
Since $\overline{a}<\overline{b}$, we draw $\alpha$ oriented from $\overline{a}$ to $\overline{b}$.
\begin{figure}
\centering
\begin{tikzpicture}[scale = 1.5]
\draw (0,0) -- (1,1) -- (2.5,1) -- (3.5,0) -- (2.5,-1) -- (1,-1) -- (0,0);
\draw[->] (0,0) -- (0.5,0.5);
\draw[->] (1,-1) -- (0.5,-0.5);
\draw[->] (1,1) -- (1.75,1);
\draw[->] (2.5,-1) -- (1.75,-1);
\draw[->](2.5,1) -- (3,0.5);
\draw[->](3.5,0) -- (3,-0.5);
\draw (0,0) node[anchor=east] {$|s_{-3},s_{-2}|$};
\draw (1,1) node[anchor=south east] {$|s_{-2}, s_{-1}|$};
\draw (2.5,1) node[anchor=south west] {$|s_0,s_1|$};
\draw (1,-1) node[anchor=north east] {$|s_{-1},s_0|$};
\draw (2.5,-1) node[anchor=north west] {$|s_1,s_2|$};
\draw (3.5,0) node[anchor=west] {$|s_2,s_3|$};
\draw (.2,-.2) -- (3.3,0.2);
\draw[->](.2,-.2) -- (1.75,0);
\draw (1.75, 0) node[anchor=south] {$\alpha$};
\filldraw (.2,-.2) circle[radius=.4mm];
\filldraw (3.3,.2) circle[radius=.4mm];
\draw (.2,-.2) node[anchor=north east] {$\overline{a}$};
\draw (3.3,.2) node[anchor=south west] {$\overline{b}$};
\end{tikzpicture}
\caption{An example of an arc $\alpha$ from $\overline{a}$ to $\overline{b}$ for a particular $\ARS$ in Example \ref{xmp:running xmp}.}\label{fig:theta arc}
\end{figure}
\end{example}

Recall that if the sinks and sources of $\ARS$ are unbounded below (respectively above) then no indecomposable in $\repARS$ may have $-\infty$ as a lower endpoint (respectively $+\infty$ as an upper endpoint) of its support.
Thus if we have $M_{|a,b|}$ and $a=-\infty$ (respectively $b=+\infty$) then we know the sinks and sources of $\ARS$ are bounded below (respectively above).
\begin{definition}\label{def:all bijection}
We now define $\Phi:\Ind(\CARS)\to\mathcal A$.
Let $M_{|a,b|}$ be an indecomposable in $\CARS$.
We define $\overline{x}$ and $\overline{y}$ in $\mathcal E$ beginning with $\overline{x}$.
\begin{itemize}
\item If $a\in\R$ is neither a sink nor a source then $\overline{x}=(a,\e)$ where $\e=-$ if and only if $a\in|a,b|$.
\item If $a=-\infty=s_m$ then $\overline{x}=((s_m,s_{m+1}),-)$.
\item If $-\infty<a=s_m$ and $a\in |a,b|$ then $\overline{x}=((s_m,s_{m+1}),-)$.
\item If $-\infty<a=s_m$ and $a\notin |a,b|$ then $\overline{x}=(|s_{m-1},s_m|,+)$.
\end{itemize}
Now, $\overline{y}$.
\begin{itemize}
\item If $b\in\R$ is neither a sink nor a source then $\overline{y}=(b,\e)$ where $\e=+$ if and only if $b\in|a,b|$.
\item If $b=+\infty=s_n$ then $\overline{y}=(|s_{n-1},s_n|,+)$.
\item If $+\infty > b=s_n$ and $b\in |a,b|$ then $\overline{y}=(|s_{n-1},s_n|,+)$.
\item If $+\infty > b=s_n$ and $b\notin |a,b|$ then $\overline{y}=((s_n,s_{n+1}),-)$.
\end{itemize}
\end{definition}

\begin{proposition}\label{prop:all bijection}
The function $\Phi$ in Definition \ref{def:all bijection} is a bijection.
\end{proposition}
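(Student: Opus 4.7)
The plan is to construct an explicit inverse $\Psi$ to $\Phi$ and verify that both compositions give the identity. Given an arc $\theta = \{\overline{a},\overline{b}\}$, I would first use the orientation rules (Definitions \ref{def:arc orientation1}--\ref{def:arc orientation3}) to designate one endpoint as the source, say $\overline{a}$, and the other as the target, $\overline{b}$. I would then interpret the source as specifying the lower endpoint of an interval (together with inclusion data) and the target as specifying the upper endpoint, by reversing the four clauses of Definition \ref{def:all bijection}. Concretely, a source of the form $(x,\e)$ with $x\in\R$ not a sink/source contributes the lower endpoint $x$, included iff $\e=-$; a source of the form $(|s_m,s_{m+1}|,-)$ contributes the lower endpoint $s_m$ (included); and a source of the form $(|s_{m-1},s_m|,+)$ contributes the lower endpoint $s_m$ (excluded). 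Symmetric clauses for the target produce the upper endpoint data, so together we obtain a unique interval $|a,b|$ and hence an indecomposable $M_{|a,b|}$ whose image under $\Phi$ we expect to be $\theta$.

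The core of the argument is to show that $\Psi$ is well-defined, i.e., that the real numbers extracted from source and target always satisfy $a\leq b$ in $\R$, and that the two endpoint interpretations are mutually consistent. This reduces to a case analysis driven by the orientation rules, whose tie-breaking clauses are designed precisely so that the smaller real-number coordinate is the source. For example, if both endpoints of $\theta$ correspond to the same sink or source $s_m$ of $A_\R$ then, up to swapping, we have $\overline{a}=(|s_{m-1},s_m|,+)$ and $\overline{b}=(|s_m,s_{m+1}|,-)$; the tie-breaking clause of Definition \ref{def:arc orientation3} selects the endpoint with $\e=-$ as source, yielding the unique interval $\{s_m\}$. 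The analogous analysis for arcs of the form $\{(x,-),(x,+)\}$ with $x\in\R$ not a sink/source, or for arcs one of whose endpoints is at $\pm\infty$, likewise produces exactly one interval in each case.

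Once $\Psi$ is defined, checking $\Psi\circ\Phi=\id_{\Ind(\CAR)}$ is direct: starting from an interval $|a,b|$, Definition \ref{def:all bijection} produces $\Phi(M_{|a,b|})=\{\overline{a},\overline{b}\}$; the orientation rules then return $\overline{a}$ as source and $\overline{b}$ as target (because the real number associated to $\overline{a}$ is $a\leq b$, the one associated to $\overline{b}$), after which $\Psi$ literally reverses the clauses that produced $\overline{a}$ and $\overline{b}$ to recover $|a,b|$. The reverse identity $\Phi\circ\Psi=\id$ on arcs follows similarly by checking that each clause of Definition \ref{def:all bijection}, applied to the interval produced by $\Psi$, gives back exactly the prescribed endpoint of $\theta$. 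The main obstacle is the bookkeeping across the many boundary cases: each endpoint can be a plain real number or of the $(|s_m,s_{m+1}|,\e)$ form, can correspond to a sink versus a source (so to $\EARdown$ versus $\EARup$), and can be at $\pm\infty$. The most delicate situation is when both endpoints fall into the same component $\EARdown$ or $\EARup$ or pin down the same sink/source, so the orientation is decided by Definition \ref{def:arc orientation3} rather than the more transparent Definitions \ref{def:arc orientation1}--\ref{def:arc orientation2}; keeping the parity conventions there straight is where any error in the proof would be most likely to hide.
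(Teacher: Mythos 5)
Your proposal is correct and follows essentially the same route as the paper: the paper proves injectivity by comparing the arcs of non-isomorphic indecomposables and proves surjectivity by exactly the construction you describe, namely using the orientation rules to identify source and target and then reversing the clauses of Definition \ref{def:all bijection} to recover the interval (with the same care at $\pm\infty$, at sinks/sources, and in the degenerate case $c=d$). Packaging this as an explicit inverse $\Psi$ with both round-trip identities is only a cosmetic difference from the paper's injective-plus-surjective presentation.
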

\begin{proof}
Let $M_{|a,b|}\not\cong M_{|c,d|}$ be indecomposables in $\CARS$.
Let $(\overline{x},\overline{y})=\Phi(M_{|a,b|})$ and $(\overline{z},\overline{w})=\Phi(M_{|c,d|})$.
Using the definition it is straightforward to check that if $a\neq c$ or $b\neq d$ then $(\overline{x},\overline{y})\neq (\overline{z},\overline{w})$.
Now suppose $a=c$ and $b=d$.
Since $M_{|a,b|}\not\cong M_{|c,d|}$ the endpoints of $|a,b|$ and $|c,d|$ must differ by at least one point.
By symmetry and possibly reversing the roles of $M_{|a,b|}$ and $M_{|c,d|}$, assume $a\in|a,b|$ and $c\notin |a,b|$.
Then $\overline{x}\neq \overline{z}$ and so $(\overline{x},\overline{y})\neq (\overline{z},\overline{w})$.
Thus, $\Phi$ is injective.

Let $\alpha=(\overline{x},\overline{y})$ be an arc in $\mathcal A$.
We now construct an interval $|a,b|$ such that $\Phi(M_{|a,b|}) = \alpha$.
If $x\in \R$ and $x$ is neither a sink nor a source then we let $a=x$ and $a\in|a,b|$ if and only if $\e=-$.
If $y\in \R$ and $y$ is neither a sink nor a source then we let $b=y$ and $b\in|a,b|$ if and only if $\e'=+$.

Suppose $x=(s_m,s_{m+1})$.
If $\e=+$, then either $y\in\R$ is greater than $s_{m+1}$ or $b=(s_n,s_{n+1})$ where $n > m$.
In this case we let $a=s_{m+1}$ and $a\notin |a,b|$.
If $\e=-$, then either $y\in\R$ is greater than $s_m$ or $y=(s_n,s_{n+1})$ where $n\geq m$; if $n=m$ then $\overline{y}=((s_{m-1},s_m),+)$.
In this case if $s_m=-\infty$ then we let $a=-\infty$ and note $a\notin|a,b|$.
If $s_m>-\infty$ then we let $a=s_m$ and $a\in|a,b|$.
We perform the dual constructions for $\overline{y}$ and $b$ as well.

Now we have $a\leq b$ and the requirements for $|a,b|$ to contain either $a$ or $b$.
We need to check $a=b$ to ensure that in this case $a,b\in|a,b|$ by our construction.
If $a=b\in\R$ is neither a sink nor a source then $\alpha=\{(a,-),(a,+)\}$ and so $|a,b|=[a,a]$.
If $a=b\in\R$ is a sink or a source let $s_n=a=b$.
Then we have that $|a,b|=\{s_n\}$.
Thus $\Phi$ is surjective and so bijective.
\end{proof}

Our rules for crossing are more complicated than before.
The cases are: straightforward (Rule \ref{rule:straightforward cross}), the ``macroscopic'' (Rule \ref{rule:macro cross}), and the ``microscopic'' (Rule \ref{rule:micro cross}).

\begin{rulez}\label{rule:straightforward cross}
Let $\alpha$ and $\beta$ be arcs with endpoints in $\mathcal E$.
\begin{itemize}
\item If both $\alpha$ and $\beta$ have endpoints in $\Edown$ then we follow Rule \ref{rule:easy cross}.
\item  If both $\alpha$ and $\beta$ have endpoints in $\Eup$ then we follow Rule \ref{rule:easy cross}.
\item If $\alpha$ has endpoints in $\Edown$ and $\beta$ has endpoints in $\Eup$ then we say $\alpha$ and $\beta$ do not cross.
\end{itemize}
\end{rulez}

\begin{example}[Example of Rule \ref{rule:straightforward cross}]\label{xmp:straightforward cross}
Let $\ARS$ have sinks $s_{-2}=-2, s_0=0,s_2=2$ and sources $s_{-1}=-1, s_1=1$ with $-\infty = s_{-3}$ and $+\infty = s_{3}$ as in Example \ref{xmp:running xmp}.
Let $\alpha^{\downarrow}$ and $\beta^{\downarrow}$ be crossing arcs with endpoints in $\Edown$.
Let $\alpha^{\uparrow}$ and $\beta^{\uparrow}$ be crossing arcs with endpoints in $\Eup$.
\begin{figure}
\centering
\begin{tikzpicture}[scale = 1.5]
\draw (0,0) -- (1,1) -- (2.5,1) -- (3.5,0) -- (2.5,-1) -- (1,-1) -- (0,0);
\draw[->] (0,0) -- (0.5,0.5);
\draw[->] (1,-1) -- (0.5,-0.5);
\draw[->] (1,1) -- (1.75,1);
\draw[->] (2.5,-1) -- (1.75,-1);
\draw[->](2.5,1) -- (3,0.5);
\draw[->](3.5,0) -- (3,-0.5);
\draw (0,0) node[anchor=east] {$|s_{-3},s_{-2}|$};
\draw (1,1) node[anchor=south east] {$|s_{-2}, s_{-1}|$};
\draw (2.5,1) node[anchor=south west] {$|s_0,s_1|$};
\draw (1,-1) node[anchor=north east] {$|s_{-1},s_0|$};
\draw (2.5,-1) node[anchor=north west] {$|s_1,s_2|$};
\draw (3.5,0) node[anchor=west] {$|s_2,s_3|$};
\draw (0.35,0.35) arc (225:360:.92);
\draw[->] (0.35,0.35) arc (225:292.5:.92);
\draw (0.35,-0.35) arc (135:0:.92);
\draw[->] (0.35,-0.35) arc (135:67.5:0.92);
\draw (1.58,1) arc (180:315:.92);
\draw[->] (1.58,1) arc (180:247.5:0.92);
\draw (1.58,-1) arc (180:45:.92);
\draw[->] (1.58,-1) arc (180:112.5:0.92);
\draw (.9,0.1) node [anchor=south west] {$\alpha^{\uparrow}$};
\draw (2.6,.1) node [anchor=south east] {$\beta^{\uparrow}$};
\draw (.9,-.1) node [anchor=north west] {$\alpha^{\downarrow}$};
\draw (2.6,-.1) node [anchor=north east] {$\beta^{\downarrow}$};
\end{tikzpicture}
\caption{A visual depiction of why ``upper'' and ``lower'' arcs (in Example \ref{rule:straightforward cross}) do not cross. We see $\alpha^{\downarrow}$ and $\beta^{\downarrow}$ cross each other but cross neither $\alpha^{\uparrow}$ nor $\beta^{\uparrow}$.}\label{fig:upper and lower thetas}
\end{figure}
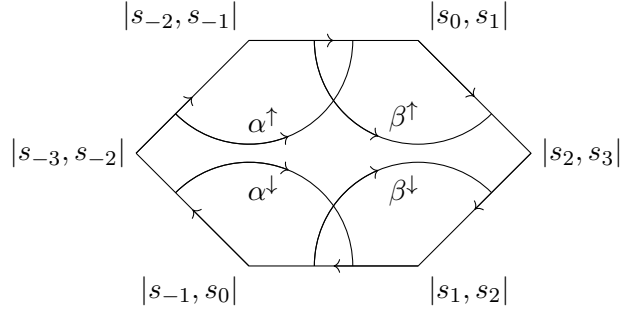
In Figure \ref{fig:upper and lower thetas} we see why the ``upper'' and ``lower'' arcs to not cross.
\end{example}

\begin{rulez}\label{rule:macro cross}
Let $\alpha$ and $\beta$ be arcs in $\mathcal A$.
Suppose $\alpha=(\overline{a},\overline{b})$ has one endpoint in $\Edown$ and the other in $\Eup$.
Let $\beta = (\overline{c},\overline{d})$.
We assume $\overline{a}$, $\overline{b}$, $\overline{c}$, and $\overline{d}$ are all distinct.
\begin{enumerate}
\item Suppose $\overline{c},\overline{d}\in\Edown$. If $\overline{c} < \overline{x} < \overline{d}$, where $\{\overline{x}\}=\{\overline{a},\overline{b}\}\cap\Edown$, we say $\alpha$ and $\beta$ cross.
\item Suppose $\overline{c},\overline{d}\in\Eup$. If $\overline{c} < \overline{x} < \overline{d}$, where $\{\overline{x}\}=\{\overline{a},\overline{b}\}\cap\Eup$, we say $\alpha$ and $\beta$ cross.
\item\label{itm:up and down} Suppose either (i) $\overline{a},\overline{c}\in\Edown$ and $\overline{b},\overline{d}\in\Eup$ or (ii) $\overline{a},\overline{c}\in\Eup$ and $\overline{b},\overline{d}\in\Edown$.
	If $\overline{a}<\overline{c}<\overline{d}<\overline{b}$ or $\overline{c}<\overline{a}<\overline{b}<\overline{d}$ we say $\alpha$ and $\beta$ cross.
\item\label{itm:up and down 2} Suppose either (i) $\overline{a},\overline{d}\in\Edown$ and $\overline{b},\overline{c}\in\Eup$ or (ii) $\overline{a},\overline{d}\in\Eup$ and $\overline{b},\overline{c}\in\Edown$.
	If $\overline{a}<\overline{d}$ and $\overline{c}<\overline{b}$ we say $\alpha$ and $\beta$ cross.
\end{enumerate}
\end{rulez}

Notice that $\overline{a}<\overline{d}$ and $\overline{c}<\overline{b}$ is not enough for Rule \ref{rule:macro cross}(\ref{itm:up and down}).
In this case, if $\overline{c}<\overline{a}<\overline{d}<\overline{b}$, for example, then $\alpha$ and $\beta$ \emph{do not} cross.
However, if $\alpha$ and $\beta$ cross, we must have $\overline{a}<\overline{d}$ and $\overline{c}<\overline{b}$.
See Example \ref{xmp:macro cross} and Figure \ref{fig:robust thetas}.

\begin{example}[Example of Rule \ref{rule:macro cross}]\label{xmp:macro cross}
Let $\ARS$ have sinks $s_{-2}=-2, s_0=0,s_2=2$ and sources $s_{-1}=-1, s_1=1$ with $-\infty = s_{-3}$ and $+\infty = s_{3}$ as in Example \ref{xmp:running xmp}.
For $\alpha$ in $\mathcal A$ such that one endpoint is in $\Edown$ and the other in $\Eup$, see Figure \ref{fig:robust thetas} for several examples of Rule \ref{rule:macro cross}.
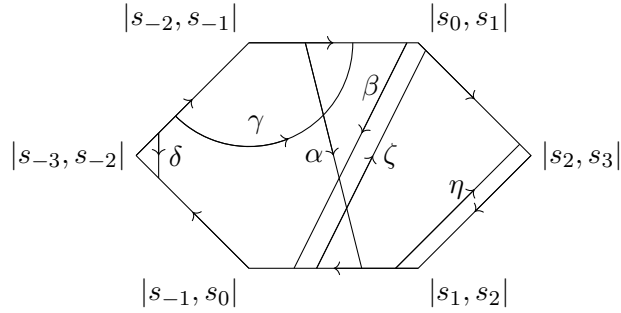
\begin{figure}
\centering
\begin{tikzpicture}[scale = 1.5]
\draw (0,0) -- (1,1) -- (2.5,1) -- (3.5,0) -- (2.5,-1) -- (1,-1) -- (0,0);
\draw[->] (0,0) -- (0.5,0.5);
\draw[->] (1,-1) -- (0.5,-0.5);
\draw[->] (1,1) -- (1.75,1);
\draw[->] (2.5,-1) -- (1.75,-1);
\draw[->](2.5,1) -- (3,0.5);
\draw[->](3.5,0) -- (3,-0.5);
\draw (0,0) node[anchor=east] {$|s_{-3},s_{-2}|$};
\draw (1,1) node[anchor=south east] {$|s_{-2}, s_{-1}|$};
\draw (2.5,1) node[anchor=south west] {$|s_0,s_1|$};
\draw (1,-1) node[anchor=north east] {$|s_{-1},s_0|$};
\draw (2.5,-1) node[anchor=north west] {$|s_1,s_2|$};
\draw (3.5,0) node[anchor=west] {$|s_2,s_3|$};
\draw (1.5,1) -- (2,-1); 
\draw[->] (1.5,1) -- (1.75,0);
\draw (2.4,1) -- (1.4,-1); 
\draw[->] (2.4,1) -- (2,.2);
\draw (0.35,0.35) arc (225:360:.92); 
\draw[->] (0.35,0.35) arc (225:292.5:.92);
\draw (.2,.2) -- (.2,-.2); 
\draw[->] (.2,.2) -- (.2,0);
\draw (1.6,-1) -- (2.565,0.93); 
\draw[->] (1.6,-1) -- (2.1, 0);
\draw (2.3, -1) -- (3.4,.1); 
\draw[->] (2.3, -1) -- (3, -.3);
\draw (1.75,0) node[anchor=east] {$\alpha$};
\draw (2.25,.6) node[anchor=east] {$\beta$};
\draw (.9,0.1) node [anchor=south west] {$\gamma$};
\draw (.2,0) node[anchor=west] {$\delta$};
\draw (2.1,0) node [anchor=west] {$\zeta$};
\draw (3, -.3) node[anchor=east] {$\eta$};
\end{tikzpicture}
\caption{Consider $\alpha$ from Example \ref{xmp:macro cross}. We have $\delta$ and $\eta$, examples of arcs with endpoints in both $\Edown$ and $\Eup$, that do not cross $\alpha$. We see $\delta$ goes ``top to bottom'' and $\eta$ goes ''bottom to top.'' We have $\beta$ and $\zeta$, also with mixed endpoints, that cross $\alpha$. We see $\beta$ goes ``top to bottom'' and $\zeta$ goes ``bottom to top.'' Finally, we have $\gamma$, with both endpoints in $\Eup$, that crosses $\alpha$ but not $\beta$. From Rule \ref{rule:macro cross}.}\label{fig:robust thetas}
\end{figure}
\end{example}

The only case not covered by Rules \ref{rule:straightforward cross} and \ref{rule:macro cross} is when two arcs share an endpoint.
\begin{rulez}\label{rule:micro cross}
Let $\alpha=(\overline{a},\overline{b})$ and $\beta=(\overline{c},\overline{d})$ be in $\mathcal A$.
We have four cases: $\overline{a} = \overline{c}$, $\overline{a}=\overline{d}$, $\overline{b}=\overline{c}$, and $\overline{b}=\overline{d}$.
(If two equalities hold at once we have $\alpha=\beta$.)
\begin{itemize}
\item If $\overline{a}=\overline{d}$ or $\overline{b}=\overline{c}$ then we say $\alpha$ and $\beta$ cross.
\item If $\overline{a}=\overline{c}$ or $\overline{b}=\overline{d}$ then we say $\alpha$ and $\beta$ do not cross.
\end{itemize}
See Figure \ref{fig:microscopic advanced} for a visual depiction of this rule.
\end{rulez}

\begin{definition}\label{def:zigzag crossing}
	Define the crossing function $\mathfrak c:\mathcal A\times\mathcal A\to\{0,1\}$ by
	\begin{displaymath}
		\mathfrak c(\alpha,\beta) = \begin{cases}
 		1 & (\alpha = \beta) \text{ or } (\alpha \text{ and } \beta \text{ cross according to Rules \ref{rule:easy cross}, \ref{rule:macro cross}, and \ref{rule:micro cross}})\\
 		0 & (\alpha\neq \beta) \text{ and } (\alpha\text{ and }\beta\text{ do not cross according to Rules \ref{rule:easy cross}, \ref{rule:macro cross}, and \ref{rule:micro cross}}).
 		\end{cases}
	\end{displaymath}
	For $\alpha\neq\beta$, if $\mathfrak{c}(\alpha,\beta)=1$ we say $\alpha$ and $\beta$ \ul{cross}.
	Otherwise, we say $\alpha$ and $\beta$ \ul{do not cross}.
\end{definition}

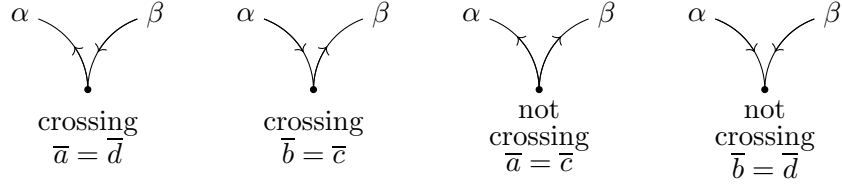
\begin{figure}
\centering
\begin{tikzpicture}
\foreach \x in {0,3,6,9}
{
	\filldraw (\x,0) circle[radius=.4mm];
	\draw (\x,0) arc(0:70:1);
	\draw (\x,0) arc (180:110:1);
	\draw (\x-0.9,1) node {$\alpha$};
	\draw (\x+0.9,1) node {$\beta$};
}

\draw[->] (3-.658,.939) arc (70:30:1);
\draw[->] (3,0) arc (180:145:1);
\draw[->] (0.658,.939) arc (110:150:1);
\draw[->] (0,0) arc (0:35:1);

\draw[->] (9-.658,.939) arc (70:30:1);
\draw[->] (6,0) arc (180:135:1);
\draw[->] (9.658,.939) arc (110:150:1);
\draw[->] (6,0) arc (0:45:1);

\foreach \x in {0,3}
	\draw (\x,-.15) node[anchor=north] {crossing};
	
\draw (0,-.5) node[anchor=north] {$\overline{a}=\overline{d}$};
\draw (3,-.5) node[anchor=north] {$\overline{b}=\overline{c}$};

\foreach \x in {6,9}
{
	\draw (\x,0) node[anchor=north] {not};
	\draw (\x,-.35) node[anchor=north] {crossing};
}

\draw (6,-.7) node[anchor=north] {$\overline{a}=\overline{c}$};
\draw (9,-.7) node[anchor=north] {$\overline{b}=\overline{d}$};
\end{tikzpicture}
\caption{A depiction of Rule \ref{rule:micro cross}.}\label{fig:microscopic advanced}
\end{figure}

We are now ready to prove the following lemma.

\begin{lemma}\label{lem:general crossing = incompatible}
Let $M_{|a,b|}\neq M_{|c,d|}$ be indecomposables in $\CARS$.
Then $\{M_{|a,b|},M_{|c,d|}\}$ is $\mathbf E$-compatible if and only if $\mathfrak{c}(\Phi(M_{|a,b|}),\Phi(M_{|c,d|}))=0$.
\end{lemma}
\begin{proof}
\ul{Setup.}
Let $\alpha=(\overline{x},\overline{y})=\Phi(M_{|a,b|})$ and $\beta=(\overline{z},\overline{w})=\Phi(M_{|c,d|})$ and note that by Lemma~\ref{prop:all bijection}, $\alpha\neq\beta$.
We note that Rules \ref{rule:straightforward cross}, \ref{rule:macro cross}, and \ref{rule:micro cross} cover all possible combinations of endpoints for $\alpha$ and $\beta$.
We show that if $\mathfrak{c}(\alpha,\beta)=1$ then $\{M_{|a,b|},M_{|c,d|}\}$ is not $\mathbf E$-compatible and if $\mathfrak{c}(\alpha,\beta)=0$ then $\{M_{|a,b|},M_{|c,d|}\}$ is $\mathbf E$-compatible.
We follow the order in which the rules were stated.

\underline{Rule \ref{rule:straightforward cross}.}
If the endpoints of $\alpha$ and $\beta$ are all contained in $\Edown$ or all contained in $\Eup$ then our if and only if statement follows from arguments similar to those in the proof of Lemma \ref{lem:crossing = incompatible}.
Without loss of generality, suppose $\alpha$ has endpoints in $\Edown$ and $\beta$ has endpoints in $\Eup$.
Then $\mathfrak{c}(\alpha,\beta)=0$.
If $a=s_m$ then $a$ is a source and if $b=s_n$ then $b$ is a sink.
Dual statements for $c$ and $d$ are true as well.
Using Definition \ref{def:E-cluster} and Proposition \ref{prop:incompatible rectangles} we see that $\{M_{|a,b|}, M_{|c,d|}\}$ is $\mathbf E$-compatible.

\underline{Rule \ref{rule:macro cross}.}
Suppose $\alpha$ has both endpoints in $\Edown$ and $\beta$ has one endpoint each in $\Edown$ and $\Eup$.
For now we assume all four endpoints of $\alpha$ and $\beta$ are distinct.
Suppose $\overline{x} < \overline{y}$,  $\overline{z}\in \Edown$, and $\overline{w}\in\Eup$.
If $\overline{x} <\overline{z} < \overline{y}$ then $\mathfrak{c}(\alpha,\beta)=1$ and one verifies there exists a distinguished triangle
\begin{displaymath}
M_{|a,b|} \to M_{|a,d|}\oplus M_{|c,b|}\to M_{|c,d|}\to 
\end{displaymath}
in $\CARS$.
By Proposition \ref{prop:incompatible rectangles}, $\{M_{|a,b|},M_{|c,d|}\}$ is not $\mathbf E$-compatible.
If $\overline{x}<\overline{z}<\overline{y}$ then $\mathfrak{c}(\alpha,\beta)=1$ and one verifies there exists a distinguished triangle
\begin{displaymath}
M_{|c,d|}\to M_{|c,b|}\oplus M_{|a,d|}\to M_{|a,b|} \to
\end{displaymath}
in $\CARS$ and by the same proposition $\{M_{|a,b|},M_{|c,d|}\}$ is not $\mathbf E$-compatible.
If $\overline{z} <\overline{x}$ or $\overline{y} < \overline{z}$ we know $\mathfrak{c}(\alpha,\beta)=0$ and it is straightforward to check that the $g$-vectors of $M_{|a,b|}$ and $M_{|c,d|}$ are $\mathbf E$-compatible.
Thus, $\{M_{|a,b|}, M_{|c,d|}\}$ is $\mathbf E$-compatible.

Now we check when $\alpha$ and $\beta$ each have one endpoint in $\Edown$ and the other in $\Eup$.
Suppose $\mathfrak{c}(\alpha,\beta)=1$.
For Rule~\ref{rule:macro cross}(\ref{itm:up and down}), and without loss of generality, let $\overline{x},\overline{z}\in\Edown$ and $\overline{y},\overline{w}\in\Eup$.
Up to symmetry, we have $\overline{x}<\overline{z}<\overline{y}<\overline{w}$ and so $a<c<b<d$ in $\R$.
One then verifies there exists a distinguished triangle
\begin{displaymath}
M_{|a,b|}\to M_{|a,d|}\oplus M_{|c,b|} \to M_{|c,d|}\to
\end{displaymath}
in $\CARS$.
Again using Proposition \ref{prop:incompatible rectangles} we see $\{M_{|a,b|},M_{|c,d|}\}$ is not $\mathbf E$-compatible.

For Rule~\ref{rule:macro cross}(\ref{itm:up and down 2}), and without loss of generality, let $\overline{x},\overline{w}\in\Edown$ and $\overline{y},\overline{z}\in\Eup$.
Then $\overline{x}<\overline{w}$ and $\overline{z}<\overline{y}$ and
one verifies there exists a distinguished triangle
\begin{displaymath}
M_{|c,d|}\to M_{|c,b|}\oplus M_{|a,d}\to M_{|a,b|}\to
\end{displaymath}
in $\CARS$.
Again, $\{M_{|a,b|},M_{|c,d|}\}$ is not $\mathbf E$-compatible.

Now suppose $\mathfrak{c}(\alpha,\beta)=0$.
If $\overline{x} > \overline{w}$ or $\overline{z} > \overline{y}$, one verifies the $g$-vectors of $M_{|a,b|}$ and $M_{|c,d|}$ are $\mathbf E$-compatible.
If $\overline{x} < \overline{w}$ and $\overline{z}<\overline{y}$ then, up to symmetry $\overline{x},\overline{z}\in\Edown$ and $\overline{y},\overline{w}\in\Eup$.
This means that $\overline{x}<\overline{z}$ and $\overline{y}\overline{w}$ or that $\overline{z}<\overline{x}$ and $\overline{w}<\overline{y}$.
Again one my check the $g$-vectors to see that $M_{|a,b|}$ and $M_{|c,d|}$ are $\mathbf E$-compatible.

\underline{Rule \ref{rule:micro cross}.}
Now we assume $\alpha$ and $\beta$ share an endpoint.

If $\overline{x}=\overline{z}$, then a straightforward calculation shows the $g$-vectors of $M_{|a,b|}$ and $M_{|c,d|}$ are $\mathbf E$-compatible.
Symmetrically, if $\overline{y}=\overline{w}$, then $\{M_{|a,b|},M_{|c,d|}\}$ is $\mathbf E$-compatible.

Next suppose $\overline{x}=\overline{w}=(e,\e)$, for $e\in \R$.
Then $M_{|a,b|}=M_{|e,b|}$ and $M_{|c,d|}=M_{|c,e|}$.
In particular, $e\in |e,b|$ if and only if $e\notin |c,e|$.
Then one verifies the following is a distinguished triangle in $\CAR$:
\begin{displaymath} M_{|c,d|}\to M_{|c,b|}\to M_{|a,b|}\to. \end{displaymath}
By Proposition \ref{prop:incompatible rectangles} again we see $\{M_{|a,b|},M_{|c,d|}\}$ is not $\mathbf E$-compatible.

Finally, suppose $\overline{x}=\overline{w} = ((s_n,s_{n+1}),\e)$ and note that $\overline{y}\neq\overline{z}$.
If $\e=-$ then $|a,b|=[s_n,b|$ and $|c,d|=|c,s_n)$.
If $\e=+$ then $|a,b|=(s_{n+1},b|$ and $|c,d|=|c,s_{n+1}]$. 
In either case, one verifies we have the following distinguished triangle in $\CARS$:
\begin{displaymath} M_{|c,d|}\to M_{|c,b|}\to M_{|a,b|}\to. \end{displaymath}
By Proposition \ref{prop:incompatible rectangles} again we see $\{M_{|a,b|},M_{|c,d|}\}$ is not $\mathbf E$-compatible.

\ul{Conclusion.} For each of Rules \ref{rule:straightforward cross}, \ref{rule:macro cross}, and \ref{rule:micro cross} we have shown (i) if $\mathfrak c(\alpha,\beta)=1$ then $\{M_{|a,b|},M_{|c,d|}\}$ is not $\mathbf E$-compatible and (ii) if $\mathfrak c(\alpha,\beta)=0$ then $\{M_{|a,b|},M_{|c,d|}\}$ is $\mathbf E$-compatible.
\end{proof}

\begin{definition}\label{def:general arc category}
	Let $\CsARS$ be an additive category whose indecomposable objects are $\mathcal A$.
	Define Hom spaces and composition of morphisms similarly to \ref{def:arc indecomposable bijection}.
	This also yields a Krull--Schmidt category.
	We say $\{\alpha,\beta\}$ is $\NRSclosed$-compatible if and only if $\alpha=\beta$ or $\mathfrak{c}(\alpha,\beta)=0$.
\end{definition}

\begin{remark}
	Notice $\NRSclosed$-compatible is equivalent to $\Hom$-orthogonal, not $\Ext$-orthogonal.
\end{remark}

\begin{cor}[to Lemma \ref{lem:general crossing = incompatible}]\label{cor:general crossing = incompatible}
Let $M_{|a,b|}$ and $M_{|c,d|}$ be indecomposables in $\CAR$.
Then $\{\Phi(M_{|a,b|}),\Phi(M_{|c,d|})\}$ is $\NRclosed$-compatible if and only if $\{M_{|a,b|},M_{|c,d|}\}$ is $\mathbf E$-compatible.
\end{cor}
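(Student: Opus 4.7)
The corollary is an essentially formal consequence of Lemma \ref{lem:general crossing = incompatible} together with the way $\NRclosed$-compatibility is defined in $\CsAR$. My plan is to split into two cases according to whether the two arcs $\Phi(M_{|a,b|})$ and $\Phi(M_{|c,d|})$ are equal, and then translate $\NRclosed$-compatibility into the crossing condition that Lemma \ref{lem:general crossing = incompatible} already handles.

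First, I would dispose of the trivial case. If $\Phi(M_{|a,b|})=\Phi(M_{|c,d|})$, then by the injectivity of $\Phi$ (Proposition \ref{prop:all bijection}) we have $M_{|a,b|}\cong M_{|c,d|}$, so $\{M_{|a,b|},M_{|c,d|}\}$ is a singleton (as a set of isomorphism classes) and is tautologically $\mathbf{E}$-compatible; similarly $\{\Phi(M_{|a,b|}),\Phi(M_{|c,d|})\}$ is $\NRclosed$-compatible by the clause ``or $\theta=\theta'$'' in Definition \ref{def:general arc category}. So the biconditional holds in this case.

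Next, for $\Phi(M_{|a,b|})\neq \Phi(M_{|c,d|})$, I would unpack the definition of $\NRclosed$-compatibility. By Definition \ref{def:general arc category} (adapted from Definition \ref{def:CR}), we have $\Hom_{\CsAR}(\theta,\theta')=k$ exactly when the arcs $\theta$ and $\theta'$ cross (since they are distinct), and $\Hom_{\CsAR}(\theta,\theta')=0$ otherwise. Hence $\{\theta,\theta'\}$ is $\NRclosed$-compatible if and only if $\theta$ and $\theta'$ do not cross. Applying Lemma \ref{lem:general crossing = incompatible} with $\theta = \Phi(M_{|a,b|})$ and $\theta'=\Phi(M_{|c,d|})$, non-crossing is equivalent to $\{M_{|a,b|},M_{|c,d|}\}$ being $\mathbf{E}$-compatible, completing the proof.

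There is no real obstacle here; the content was already established in Lemma \ref{lem:general crossing = incompatible}, and this corollary is simply the restatement in the language of the category $\CsAR$ and its compatibility condition $\NRclosed$. The only mild bookkeeping is ensuring that the diagonal case $\theta=\theta'$ is treated consistently with the convention that clusters are sets of isomorphism classes, which is why Proposition \ref{prop:all bijection} is invoked.
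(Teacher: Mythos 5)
Your proof is correct and follows essentially the same route as the paper: unpack $\NRclosed$-compatibility as non-crossing of the arcs and then invoke Lemma \ref{lem:general crossing = incompatible}. Your explicit treatment of the diagonal case $\Phi(M_{|a,b|})=\Phi(M_{|c,d|})$ via injectivity of $\Phi$ is in fact slightly more careful than the paper's own one-line argument (and matches how the paper handles the analogous Corollary \ref{cor:crossing = compatible} in the straight case).
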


\begin{theorem}\label{thm:general geometric model}
Let $\ARS$ be a continuous quiver of type $A$.
The pairwise compatibility condition $\NRSclosed$ induces the $\NRSclosed$-cluster theory of $\CsARS$ and $\Phi$ induces an isomorphism of cluster theories $(F,\eta):\mathscr T_{\NRSclosed}(\CsARS)\to \mathscr T_{\mathbf E}(\CARS)$.
\end{theorem}
\begin{proof}
By Proposition \ref{prop:all bijection} and Definition \ref{def:general arc category} we have a bijection $\Phi:\Ind(\CARS)\to \Ind(\CsARS)$.
The set $\{M_{|a,b|},M_{|c,d|}\}$ is $\mathbf E$-compatible if and only if $\{\Phi(M_{|a,b|}), \Phi(M_{|c,d|})\}$ is $\NRSclosed$-compatible, by Corollary \ref{cor:general crossing = incompatible}.
Thus by Lemma \ref{lem:bijection and same compatibility gives equivalence} $\NRSclosed$ induces the cluster theory $\mathscr T_{\NRSclosed}(\CsARS)$ and we have the isomorphism of cluster theories given by $F(T) := \{\Phi^{-1}(\alpha) \mid \alpha\in T \}$ and $\eta_T(\alpha) := \Phi^{-1}(\alpha)$.
\end{proof}

\subsection{On the Classification of Cluster Theories of Continuous Type $A$}\label{sec:cluster theory classification}
In this section we identity some cluster theories of continuous type $A$ which are isomorphic.
We show there are at least four isomorphism classes of such cluster theories.
The following notation is useful.
\begin{notation}\label{note:isomorphism of cluster theories}
Let $\mathscr T_{\mathbf P}(\mathcal C)$ and $\mathscr T_{\mathbf Q}(\mathcal D)$ be two cluster theories.
If there is an isomorphism of cluster theories $(F,\eta):\mathscr T_{\mathbf P}(\mathcal C)\to \mathscr T_{\mathbf Q}(\mathcal D)$ then we say $\mathscr T_{\mathbf P}(\mathcal C)$ is isomorphic to $\mathscr T_{\mathbf Q}(\mathcal D)$ and write $\mathscr T_{\mathbf P}(\mathcal C)\cong \mathscr T_{\mathbf Q}(\mathcal D)$.
\end{notation}

\begin{notation}\label{note:reversing notation}
	Let $\ARS$ be a continuous quiver of type $A$.
	\begin{itemize}
		\item By $(\ARS)^{-1}$ we denote the continuous quiver $A_{\R,R}$ where, if $-\infty\neq s_0$, each source $r_n$ in $A_{\R,R}$ is equal to a sink $s_{n-1}$ and similarly for sinks in $R$.
			If $-\infty=s_0$ in $\ARS$, then each source $r_n$ in $A_{\R,R}$ is instead equal to a source $r_{n+1}$ and similarly for sinks in $R$.
		\item By $-(\ARS)$ we denote the the continuous quiver $A_{\R,R}$ where each sink $r_{2n}$ in $A_{\R,R}$ is equal to the sink $-s_{-2n}$ in $\ARS$ and similarly for sources.
	\end{itemize}	
\end{notation}

\begin{remark}\label{rmk:reversing remark}
	Notice that $-(-(\ARS))=\ARS$.
	Furthermore, if $-\infty=s_0$ or $-\infty=s_{-1}$ then $((\ARS)^{-1})^{-1})=\ARS$.
	If $-\infty\neq s_0$ and $-\infty\neq s_1$, then we still have $\repARS$ is equivalent to $\rep_{\kk}(((\ARS)^{-1})^{-1})$ as $\kk$-linear abelian categories.
	
	Finally, we see $(-(\ARS))^{-1}=((-((\ARS)^{-1}))^{-1})^{-1}$ and so $\rep_{\kk}((-(\ARS))^{-1})$ is equivalent to $\rep_{\kk}(-((\ARS)^{-1}))$ as abelian categories.
\end{remark}

\begin{proposition}\label{prop:opposite cluster theories}
	Let $\ARS$ be a continuous quiver of type $A$ and $A_{\R,R}=(\ARS)^{-1}$.
	Then $\mathscr T_{\NRSclosed}(\CsARS)\cong \mathscr T_{\mathbf N_{\R,R}} (\mathcal C_{\overline{\R,R}})$.
\end{proposition}
\begin{proof}
	Denote the sets of endpoints and arcs for $\ARS$ by $\mathcal E_S$ and $\mathcal A_S$.
	Denote the sets of endpoints and arcs for $A_{\R,R}$ by $\mathcal E_R$ and $\mathcal A_R$.
	Let the respective crossing functions be $\mathfrak{c}_S$ and $\mathfrak{c}_R$.
	There is an order preserving bijections $g:\Edown_S \stackrel{\cong}{\to} \Eup_R$ and $h:\Eup_S\stackrel{\cong}{\to} \Edown_R$.
	Let $f:\mathcal E_S\stackrel{\cong}{\to} \mathcal E_R$ be the bijection that is $g$ on $\Edown_S$ and $h$ on $\Eup_S$.
	
	Notice Rules \ref{rule:easy cross}, \ref{rule:macro cross}, and \ref{rule:micro cross} are symmetric with respect to $\Eup$ and $\Edown$, except at sinks and sources.
	Let $\alpha=(\overline{x},\overline{y})$ and $\beta=(\overline{z},\overline{w})$ be in $\mathcal A_S$.
	Let $\gamma=(f(\overline{x},f(\overline{y}))$ and $\delta=(f(\overline{z},\overline{w}))$.
	Then $\mathfrak{c}_S(\alpha,\beta)=1$ if and only if $\mathfrak{c}_R(\gamma,\delta)=1$.
	Now apply Lemma \ref{lem:bijection and same compatibility gives equivalence}.
\end{proof}

\begin{proposition}\label{prop:reversing cluster theories}
	Let $\ARS$ be a continuous quiver of type $A$ and $A_{\R,R}=-(\ARS)$.
	Then $\mathscr T_{\NRSclosed}(\CsARS)\cong \mathscr T_{\mathbf N_{\R,R}} (\mathcal C_{\overline{\R,R}})$.
\end{proposition}
\begin{proof}
	Let $\mathcal E_S$, $\mathcal A_S$, $\mathfrak{c}_s$, $\mathcal E_R$, $\mathcal A_R$, and $\mathfrak{c}_R$ be as in the proof of Proposition~\ref{prop:opposite cluster theories}.
	Then we have order \emph{reversing} bijections $g:\Edown_S \stackrel{\to}{\cong} \Edown_R$ and $h:\Eup_S\to\Eup_R$.
	Let $f:\mathcal E_S\stackrel{\to}{\cong}\mathcal E_R$ be the bijection that is $g$ on $\Edown_S$ and $h$ on $\Eup_S$.
	Now proceed by a similar argument to Proposition~\ref{prop:opposite cluster theories}.
\end{proof}

\begin{theorem}\label{thm:reverse opposite cluster theories}
	Let $\ARS$ be a continuous quiver of type $A$.
	Then there is a diagram of isomorphisms of cluster theories:
	\begin{displaymath}
		\xymatrix{
			\mathscr T_{\mathbf E}(\CARS) \ar@{<->}[r]^-{\cong} \ar@{<->}[d]_-{\cong} & \mathscr T_{\mathbf E}(\mathcal C((\ARS)^{-1})) \ar@{<->}[d]^-{\cong} \\
			\mathscr T_{\mathbf E}(\mathcal C(-(\ARS))) \ar@{<->}[r]_-{\cong} & \mathscr T_{\mathbf E}(\mathcal C(-((\ARS)^{-1}))).
		}
	\end{displaymath}
\end{theorem}
\begin{proof}
	Apply Propositions \ref{prop:opposite cluster theories} and \ref{prop:reversing cluster theories} and Remark \ref{rmk:reversing remark}.
\end{proof}
\begin{cor}\label{cor:sinks and sources count}
	Let $\ARS$ and $A_{\R,S}$ be continuous quivers of type $A$ such that one of the following is true: (i) $|S|=|R|$ and $|S|<\infty$, (ii) $S$ and $R$ are both bounded on exactly one side, or (iii) both $S$ and $R$ are indexed by $\Z$.
	Then $\mathscr{T}_{\mathbf{E}}(\CARS)\cong \mathscr{T}_{\mathbf E}(\mathcal C(A_{\R,R}))$.
\end{cor}
\begin{proof}
	If there is an order- \emph{and} indexing-preserving bijection $S\cong R$ then $\repARS$ and $\rep_{\kk}(A_{\R,R})$ are equivalent as abelian categories and $\mathscr{T}_{\mathbf{E}}(\CARS)\cong \mathscr{T}_{\mathbf E}(\mathcal C(A_{\R,R}))$.
	Now apply Theorem \ref{thm:reverse opposite cluster theories}.
\end{proof}

The classification of cluster theories in Corollary \ref{cor:sinks and sources count} is nearly the classification of derived categories in Theorem \ref{thm:derived equivalence}.

We have two remaining isomorphsisms of cluster theories we would like:
\begin{enumerate}
\item Any isomorphism between $\mathscr T_{\NRSclosed}(\CsARS)$ and $\mathscr T_{\mathbf N_{\overline{\R,R}}}(\mathcal C_{\overline{\R,R}})$ where $\ARS$ has an even number $\geq 2$ of sinks and sources in $\R$ and $A_{\R,R}$ has an odd number of sinks and sources in $\R$.
\item An isomorphism between $\mathscr T_{\NRclosed}(\CsAR)$ and $\mathscr T_{\NRSclosed}(\CsARS)$ where $\AR$ has no sinks or sources in $\R$ and $\ARS$ has an even number $\geq 2$ of sinks and sources in $\R$.
\end{enumerate}

We immediately share the unfortunate news:
\begin{proposition}
Let $\AR$ be a continuous quiver of type $A$ with straight descending or straight ascending orientation.
Let $\ARS$ be a continuous quiver of type $A$ with at least one sink or source in $\R$.
Then there is no isomorphism of cluster theories $\mathscr T_{\NRclosed}(\CsAR)\to \mathscr T_{\NRSclosed}(\CsARS)$.
\end{proposition}
\begin{proof}
The arc $\alpha$ corresponding to the indecomposable $M_{(-\infty,+\infty)}$ in $\CAR$ is in every $\NRclosed$-cluster of $\mathscr T_{\NRclosed}(\CsAR)$.
The arcs corresponding to the projectives from $\repARS$ form an $\NRSclosed$-cluster; this is similarly true for the arcs corresponding to the injectives from $\repARS$.
However, there are not projective-injective objects in $\repARS$ and so these two clusters share no elements.
Therefore, there cannot be such an isomorphism of cluster theories.
\end{proof}

This leaves us with at least four isomorphism classes of cluster theories of continuous type $A$: (i) no sinks or sources in $\R$, (ii) finitely-many sinks and sources in $\R$, (iii) half-bounded sinks and sources in $\R$, and (iv) unbounded sinks and sources in $\R$.
However, it is not clear whether (ii) is just one class, separate classes for even and odd numbers, or a separate class for all numbers.

\noindent \textbf{Open Questions:} 
\begin{itemize}
\item Does there exist a weak equivalence of cluster theories 
	\begin{displaymath}
		\mathscr T_{\NRclosed}(\CsAR)\to \mathscr T_{\NRSclosed}(\CsARS) \qquad \text{or} \qquad \mathscr T_{\NRSclosed}(\CsARS)\to \mathscr T_{\NRclosed}(\CsAR),
	\end{displaymath}
	where $\AR$ has no sinks or sources in $\R$ and $\ARS$ has an even number $\geq 2$ of sinks and sources in $\R$?
\item Does there exist an isomorphism of cluster theories or weak equivalence of cluster theories 
	\begin{displaymath}
		\mathscr T_{\NRSclosed}(\CsARS)\to \mathscr T_{\mathbf N_{\overline{\R,R}}}(\mathcal C_{\overline{\R,R}}) \qquad \text{or} \qquad \mathscr T_{\mathbf N_{\overline{\R,R}}}(\mathcal C_{\overline{\R,R}})\to \mathscr T_{\NRSclosed}(\CsARS),
	\end{displaymath}
	where $\ARS$ has an odd number $n$ of sinks and sources in $\R$ and $A_{\R,R}$ has $n+1$ sinks and sources in $\R$?
\end{itemize}

\subsection{Connection to $\mathbf E$-Mutations}\label{sec:connections to E-mutations}
Let $\ARS$ be a continuous quiver of type $A$.
In this section we use geometric models to draw a $\NRSclosed$-mutation corresponding to an $\mathbf E$-mutation.
Because of our rules on crossing, mutation is not as clearly described as swapping diagonals of a quadrilateral.
However, we can make similar descriptions.
Let us begin with the ``microscopic'' scale.
Let $\ARS$ be a continuous quiver of type $A$ with at least one sink or source in $\R$.
Let $a<b\in\R$ such that neither $a$ nor $b$ is a sink or source and $(a,\e),(b,\e)\in\Edown$, for any $\e\in\{-,+\}$.

Let $T$ be an $\NRSclosed$-cluster such that $((a,-),(b,+)),((a,+), (b,+)),((a,+),(b,-))\in T$.
These correspond to the indecomposables $M_{[a,b]}$, $M_{(a,b]}$, and $M_{(a,b)}$, respectively, in $\CARS$.
We can mutate at $\{(a,+), (b,+)\}$ to obtain $(T\setminus\{\{(a,+),(b,+)\}\})\cup\{\{(a,-),(b,-)\}\}$.
The picture one should have in mind is Figure \ref{fig:micro mutation}.
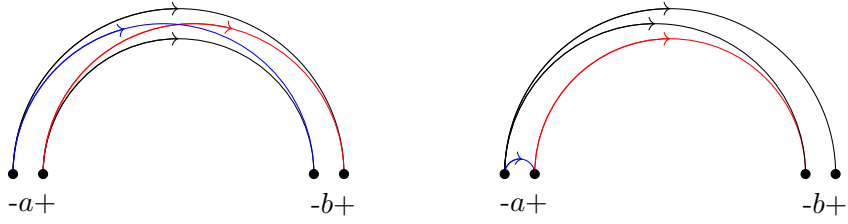
\begin{figure}
\centering
\begin{tikzpicture}[scale=2]
\draw (-0.1,0) arc (180:0:1.1);
\draw[->] (-0.1,0) arc (180:90:1.1);
\draw (0.1,0) arc (180:0:0.9);
\draw[->] (0.1,0) arc (180:90:0.9);
\draw[red] (0.1,0) arc (180:0:1);
\draw[red, ->] (0.1,0) arc (180:75:1);
\draw[blue] (-0.1,0) arc (180:0:1);
\draw[blue, ->] (-0.1,0) arc (180:105:1);
\foreach \x in {0,2}
{
	\filldraw (\x-0.1,0) circle[radius=.3mm];
	\filldraw (\x+0.1,0) circle[radius=.3mm];
}
\draw (0.03,-0.2) node {-$a$+};
\draw (2.03,-0.2) node {-$b$+};
\end{tikzpicture}
\qquad\qquad
\begin{tikzpicture}[scale=2]
\draw (-0.1,0) arc (180:0:1.1);
\draw[->] (-0.1,0) arc (180:90:1.1);
\draw[red] (0.1,0) arc (180:0:0.9);
\draw[->, red] (0.1,0) arc (180:90:0.9);
\draw (-0.1,0) arc (180:0:1);
\draw[->] (-0.1,0) arc (180:90:1);
\draw[blue] (-0.1,0) arc (180:0:0.1);
\draw[blue,->] (-0.1,0) arc (180:80:0.1);
\foreach \x in {0,2}
{
	\filldraw (\x-0.1,0) circle[radius=.3mm];
	\filldraw (\x+0.1,0) circle[radius=.3mm];
}
\draw (0.03,-0.2) node {-$a$+};
\draw (2.03,-0.2) node {-$b$+};
\end{tikzpicture}
\caption{Depictions of mutation on the ``microscopic'' scale, where we replace the red arc with the blue arc. First we mutate at $((a,+), (b,+))$ and then mutate at $((a,+),(b,-))$. Notice the arc orientations pointing from the lower element to the upper element. However, we only \emph{need} these orientations in the right picture.}\label{fig:micro mutation}
\end{figure}

We now move to the ``macroscopic'' scale.
In $\CARS$, we know that if $\{M_{|a,b|}M_{|c,d|}\}$ is not $\mathbf E$-compatible then, up to reversing the roles of the indecomposables, we have the following distinguished triangle in $\CARS$:
\begin{displaymath} M_{|a,b|} \to M_{|a,d|}\oplus M_{|c,b|}\to M_{|c,d|} \to\end{displaymath}
where one of $M_{|a,d|}$ or $M_{|c,b|}$ may be 0.
Now suppose we are $\mathbf E$-mutating in some cluster at $M_{|a,b|}$ and obtain $M_{|c,d|}$, the left picture in Figure \ref{fig:macro mutation}.
If the middle object in the distinguished triangle is not an indecomposable, then, in the geometric model, we have two of the four sides of the quadrilateral we see in triangulations of polygons.

However, we do not know if we have the indecomposables corresponding to the two dotted arcs that complete the quadrilateral.
The dotted arcs may be incompatible with $M_{|a,b|}$ and/or $M_{|c,d|}$.
For example, if $b\in|a,b|$ then there is no arc with $(b,-)$ as a lower endpoint that is compatible with either of the arcs corresponding to $M_{|a,b|}$ and $M_{|c,b|}$.
In the case where one of $M_{|c,b|}$ or $M_{|a,d|}$ is 0, we instead have the right picture in Figure~\ref{fig:macro mutation}.
If some of the endpoints are in $\Edown$ and others in $\Eup$ then we instead draw pictures such as those in Figure \ref{fig:macro mixed mutation}.
\begin{figure}[h]
\centering
\begin{tikzpicture}[scale=2];
\draw (0,0) arc (180:0:1.5);
\draw[->] (0,0) arc (180:90:1.5);
\draw (1,0) arc (180:0:0.5);
\draw[->] (1,0) arc (180:90:0.5);
\draw[red] (0,0) arc (180:0:1);
\draw[->, red] (0,0) arc (180:90:1);
\draw[blue] (1,0) arc (180:0:1);
\draw[->, blue] (1,0) arc (180:90:1);
\draw[dotted, thick](0,0) arc (180:0:0.5);
\draw[dotted, thick ](2,0) arc (180:0:0.5);
\foreach \x in {0,...,3}
	\filldraw (\x,0) circle[radius=.3mm];
\end{tikzpicture}
\qquad \qquad
\begin{tikzpicture}[scale = 2]
\draw (0,0) arc (180:0:1);
\draw[->] (0,0) arc (180:90:1);
\draw[red] (0,0) arc (180:0:0.5);
\draw[->, red] (0,0) arc (180:90:0.5);
\draw[blue] (1,0) arc (180:0:0.5);
\draw[->, blue] (1,0) arc (180:90:0.5);
\foreach \x in {0,...,2}
	\filldraw (\x,0) circle[radius=.3mm];
\end{tikzpicture}
\caption{Depictions of mutation on the ``macroscopic'' scale where we replace red arcs with blue arcs. Notice the arc orientations pointing from the lower element to the upper element. We only \emph{need} the orientations in the right picture.}\label{fig:macro mutation}
\end{figure}
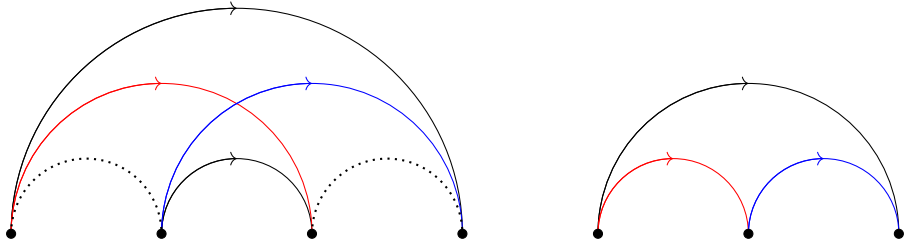
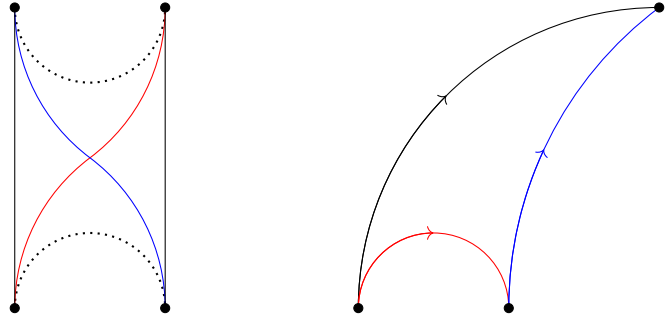
\begin{figure}[h]
\centering	
\begin{tikzpicture}[scale = 2]
\draw[dotted, thick] (0,0) arc (180:0:0.5);
\draw[dotted, thick] (0,2) arc (180:360:0.5);
\draw[red] (0,0) arc (180:126.87:1.25) arc (306.87:360:1.25);
\draw[blue] (0,2) arc (180:233.13:1.25) arc (53.13:0:1.25);
\foreach \x in {0,1}
{
	\foreach \y in {0,2}
	{
		\filldraw (\x,\y) circle[radius=.3mm];
	}
	\draw (\x,0)-- (\x,2);
}
\end{tikzpicture}
\qquad \qquad \qquad
\begin{tikzpicture}[scale = 2]
\draw (0,0) arc (180:90:2);
\draw[->] (0,0) arc (180:135:2);
\draw[red] (0,0) arc (180:0:0.5);
\draw[->, red] (0,0) arc (180:90:0.5);
\draw[blue] (1,0) arc (180:126.87:2.5);
\draw[->, blue] (1,0) arc (180:155:2.5);
\filldraw (0,0) circle[radius=.3mm];
\filldraw (1,0) circle[radius=.3mm];
\filldraw (2,2) circle[radius=.3mm];
\end{tikzpicture}
\caption{Depiction of macroscopic mutation where the arcs have endpoints mixed above and below. We mutate the red arc to the blue arc. The orientations of the arcs in the left picture do not matter but we need these in the right picture.}\label{fig:macro mixed mutation}
\end{figure}

\section{Continuous Mutation and Mutation Paths}\label{sec:continuous mutation and mutation paths}
This section is dedicated to the definition of a continuous mutation and the basic properties of continuous mutations.
These generalize the familiar notion of mutation in a cluster structure.
We define this new type of mutation for all cluster theories (Definition \ref{def:cluster theory}) though we use type $A$ cluster theories for our examples.
Notably, for a cluter theory $\mathscr T_{\mathbf P}(\mathcal C)$, any $\mathbf P$-mutation can be thought of as a continuous $\mathbf P$-mutation (see Example \ref{xmp:traditional mutation}).
In Section \ref{sec:connections to E-mutation paths}, we show how to interpret continuous mutations via geometric models (Sections \ref{sec:straight AR} and \ref{sec:other orientations of AR}) using continuous type $A$ as an example.
The final subsection of this section is dedicated to the space of mutations (Definition \ref{def:space of mutations}), which generalizes the exchange graph of a cluster structure.
We pose questions related specifically to the $\mathbf E$-cluster theory of an arbitrary continuous quiver of type $A$ at the end.

For Section \ref{sec:continuous mutation and mutation paths} we fix $\mathcal C$ a Krull--Schmidt category and $\mathbf P$ a pairwise compatibility condition on the indecomposables in $\mathcal C$ such that $\mathbf P$ induces the $\mathbf P$-cluster theory of $\mathcal C$ (Definition \ref{def:cluster theory}).

\subsection{Continuous Mutation}\label{sec:continuous mutation}
In this section we define continuous mutation.
In order to better interpret continuous mutations, we need to define a trivial mutation.

\begin{definition}\label{def:trivial mutation}
	A \ul{trivial $\mathbf P$-mutation} is an identity function $\id_T:T\to T$, for any $\mathbf P$-cluster $T$.
\end{definition}

\begin{definition}\label{def:continuous mutation}
	Let $T$ and $T'$ be $\mathbf P$-clusters and $\mu:T\to T'$ a bijection.
	We call $\mu$ a \ul{continuous $\mathbf P$-mutation} if it satisfies the following four properties.
	\begin{itemize}
		\item There is a set $S\subset T$ such that $\mu X = X$ if and only if $X\notin S$.
		\item Let $S'=\mu (S)$. For all $\mu X\in S'$, (i) $\mu X\notin T$ and (ii) $\{X,\mu X\}$ is not $\mathbf P$-compatible.
		\item There exist injections $f_\mu: S\to [0,1]$ and $g_\mu:S'\to [0,1]$ such that $(g_\mu \circ \mu)|_S = (f_\mu)|_S$.
		\item For any subinterval $J\subset [0,1]$, where $0\in J$ and $1\notin J$, the following is a $\mathbf P$-cluster:
			\begin{displaymath}
				\left(T\setminus f_\mu^{-1}(J)\right) \cup g_\mu^{-1}(J) = \left(T'\setminus g_\mu^{-1}(\bar{J})\right) \cup f_\mu(\bar{J}),
			\end{displaymath}
			where $\bar{J}=[0,1]\setminus J$.
	\end{itemize}
\end{definition}

We need to justify the word `mutation.'
We do this with Propositions \ref{prop:continuous mutations are reversible} and \ref{prop:a continuum of mutations}.
The first states that every continuous mutation can be reversed.
The second states that we may consider a continuous mutation a collection of mutations, one each at time $t$, for all $t\in[0,1]$.

The proof of the following proposition is a straightforward application of the definition.
\begin{proposition}\label{prop:continuous mutations are reversible}
	Let $\mu:T\to T'$ be a continuous $\mathbf P$-mutation.
	Then $\mu^{-1}:T'\to T$ is also a continuous $\mathbf P$-mutation.
\end{proposition}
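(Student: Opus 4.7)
The plan is to unpack Definition \ref{def:continuous mutation} for $\mu^{-1}$, reusing the data $(S, S', f_\mu, g_\mu)$ attached to $\mu$ after the orientation-reversing reparametrization $t \mapsto 1 - t$ of the parameter interval $[0,1]$.

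First I would fix the candidate data for $\mu^{-1} : T' \to T$. The moving subset is $S' \subset T'$ (since $\mu$, hence $\mu^{-1}$, is the identity off $S$, resp.\ $S'$), with $\mu^{-1}(S') = S$. I would take
\[
f_{\mu^{-1}} := 1 - g_\mu, \qquad g_{\mu^{-1}} := 1 - f_\mu,
\]
which remain injections into $[0,1]$, and the required equality $g_{\mu^{-1}} \circ \mu^{-1}|_{S'} = f_{\mu^{-1}}$ follows instantly from $g_\mu \circ \mu|_S = f_\mu$.

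Next I would check the two ``disjointness'' conditions for each $Y \in S'$: that $\mu^{-1} Y \notin T'$ and that $\{Y, \mu^{-1} Y\}$ is not $\mathbf{P}$-compatible. The latter is symmetric in its two arguments and so is inherited from $\mu$. For the former, the hypothesis $\mu X \notin T$ for all $X \in S$ gives $S' \cap T = \emptyset$; since $T' = (T \setminus S) \cup S'$ as sets, this forces $S \cap T' = \emptyset$, so $\mu^{-1} Y = X \in S$ lies outside $T'$.

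The main step is the interval condition. Given a subinterval $J' \subset [0,1]$ with $0 \in J'$ and $1 \notin J'$, I would set $J := [0,1] \setminus (1 - J')$, where $1 - J' := \{1 - t : t \in J'\}$. A short case check on $J' = \{0\}$, $[0,s)$, and $[0,s]$ shows that $J$ is again a subinterval with $0 \in J$, $1 \notin J$, and $\bar{J} = 1 - J'$. Substituting the definitions of $f_{\mu^{-1}}$ and $g_{\mu^{-1}}$ yields
\[
f_{\mu^{-1}}^{-1}(J') = g_\mu^{-1}(\bar{J}), \qquad g_{\mu^{-1}}^{-1}(J') = f_\mu^{-1}(\bar{J}),
\]
so
\[
(T' \setminus f_{\mu^{-1}}^{-1}(J')) \cup g_{\mu^{-1}}^{-1}(J') = (T' \setminus g_\mu^{-1}(\bar{J})) \cup f_\mu^{-1}(\bar{J}),
\]
which by the original interval condition (the ``equal'' side of the displayed identity in Definition \ref{def:continuous mutation}) coincides with $(T \setminus f_\mu^{-1}(J)) \cup g_\mu^{-1}(J)$ and hence is a $\mathbf{P}$-cluster.

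The only real obstacle is bookkeeping: after the flip $t \mapsto 1 - t$ the natural parameter interval contains $1$ rather than $0$, so one must pass to its complement in $[0,1]$ to match the hypothesis of the original interval condition. Once this reparametrization is in place, every remaining verification is a direct substitution, and no new compatibility or clustering claim needs to be established beyond those already granted by $\mu$.
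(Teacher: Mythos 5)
Your proposal is correct and follows essentially the same route as the paper's proof: reflect the parameter interval by $t\mapsto 1-t$, swap the roles of $f_\mu$ and $g_\mu$, and deduce the interval condition for $\mu^{-1}$ from the one for $\mu$ applied to the complementary interval. If anything, your write-up is more careful than the paper's on the two disjointness conditions and the bookkeeping that $J=[0,1]\setminus(1-J')$ is again an admissible subinterval, but the underlying argument is identical.
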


\begin{proposition}\label{prop:a continuum of mutations}
	Let $\mu:T\to T'$ be a continuous $\mathbf P$-mutation. 
	For every $t\in [0,1]$, the following bijection $(T\setminus f_\mu^{-1}([0,t)))\cup g_\mu^{-1}([0,t))  \to (T\setminus f_\mu^{-1}([0,t]))\cup g_\mu^{-1}([0,t])$ is a $\mathbf P$-mutation:
	\begin{displaymath}
	X\mapsto \left\{ \begin{array}{ll} X & X\neq f^{-1}(t) \\ g^{-1}(t) & X=f^{-1}(t). \end{array} \right.
	\end{displaymath}
\end{proposition}
\begin{proof}
	In the case $(T\setminus f_\mu^{-1}([0,t)))\cup g_\mu^{-1}([0,t)) = (T\setminus f_\mu^{-1}([0,t]))\cup g_\mu^{-1}([0,t])$ we have a trivial $\mathbf P$-mutation.
	Suppose $(T\setminus f_\mu^{-1}([0,t)))\cup g_\mu^{-1}([0,t))  \neq (T\setminus f_\mu^{-1}([0,t]))\cup g_\mu^{-1}([0,t])$.
	Since $f_\mu$ and $g_\mu$ are injections, $f_\mu^{-1}([0,t))$ differs from $f_\mu^{-1}([0,t])$ by at most one element and by assumption they differ by at least one element; thus differing by exactly one element.
	This is similarly true for $g_\mu^{-1}([0,t))$ and $g_\mu^{-1}([0,t])$. 
	By definition, $\mu (f_\mu^{-1}(t)) = g_\mu^{-1}(t)$ and $\{f_\mu^{-1}(t), g_\mu^{-1}(t)\}$ are not $\mathbf P$-compatible.
	Therefore, we have a $\mathbf P$-mutation.
\end{proof}

We conclude with this final definition that is useful in asking questions about the classification of $\mathbf E$-clusters in $\CARS$ (Definition \ref{def:E-cluster}) at the end of Section \ref{def:space of mutations}.

\begin{definition}\label{def:sequence of continuous mutations}
	Let $Z=\{1,\ldots,n\}$ or $Z=\Z_{>0}$.
	For each $i\in Z$ let $\mu_i$ be a continuous $\mathbf P$-mutation such that the target of $\mu_i$ is the source of $\mu_{i+1}$ when $i,i+1\in Z$.
	We call $\{\mu_i\}_{i\in Z}$ a \ul{sequence of continuous $\mathbf P$-mutations}.
	If each $\mu_i$ mutates only one element of $T_i$ we may also say that $\{\mu_i\}$ is a \ul{sequence of $\mathbf P$-mutations}.
\end{definition}

\subsection{Examples}\label{sec:continuous examples}

In this section we highlight two existing examples of continuous mutations that do not feel so continuous followed by a new example.
The first (Example \ref{xmp:traditional mutation}) shows that a mutation, in the traditional sense, can be thought of as a continuous mutation.
The second (Example \ref{xmp:transfinite mutation}) describes an infinite sequence of mutations.
While these both exist in the literature, the contribution is that continuous mutation unifies the way to describe these types of mutations.
We conclude with Proposition \ref{prop:proj to inj}, which, as far as the author knows, does not exist anywhere in the literature.

\begin{example}\label{xmp:traditional mutation}
Let $\mathcal C$ be a Krull--Schmidt category with pairwise compatibility condition $\mathbf P$ on indecomposables such that $\mathbf P$ induces the $\mathbf P$-cluster theory of $\mathcal C$.
Let $\mu:T\to (T\setminus\{X\})\cup\{Y\}$ be a $\mathbf P$-mutation.
Furthermore, let $S=\{X\}$, $S'=\{Y\}$, and $T'=(T\setminus \{X\})\cup\{Y\}$.
Finally, let $f:\{X\}\to [0,1]$ and $g:\{Y\}\to [0,1]$ each send $X$ and $Y$ to $\frac{1}{2}$, respectively.
This meets the requirements for the definition of a continuous mutation.
\end{example}

The second example is based on the completed infinity-gon from \cite{BaurGraz}. 

\begin{definition}\label{def:completed infinity-gon}
	Let $\mathcal E = \Z\cup\{-\infty,+\infty\}$ with the usual total ordering.
	Let 
	\begin{displaymath}
		\mathcal A = \{(i,j)\in \mathcal E\times\mathcal E \mid \exists k\in\mathcal E \text{ s.t } i<k<j\}\setminus \{(-\infty,+\infty)\}.
	\end{displaymath}
	Define the crossing function $\mathfrak c:\mathcal A\times\mathcal A\to\{0,1\}$ by
	\begin{displaymath}
		\mathfrak c ((i,j),(i',j')) = \begin{cases}
 			1 & ((i,j)=(i',j')) \text{ or } (i<i'<j<j') \text{ or } (i'<i<j<j') \\
 			0 & \text{otherwise}.
 		\end{cases}
	\end{displaymath}
	
	We define $\Cinftyclosed$ to be the additive category whose indecomposable objects are $\mathcal A$.
	Define Hom spaces and composition as in Definitions~\ref{def:arc indecomposable bijection} and \ref{def:general arc category}.
	We again obtain a Krull--Schmidt category.
	For $\alpha\neq\beta$, we say $\{\alpha,\beta\}$ is $\Ninftyclosed$-compatible if and only if $\mathfrak c(\alpha,\beta)=0$.
\end{definition}

Baur and Graz proved in \cite{BaurGraz} that $\Ninftyclosed$ induces the $\Ninftyclosed$-cluster theory of $\Cinftyclosed$.

Baur and Graz define a \ul{$T$-admissible sequence of arcs} $\{\alpha_i\}$ is one where $\alpha_1$ is $\Ninftyclosed$-mutable in $T_1=T$ and each $T_i$ for $i>1$ is obtained by mutating $\alpha_{i-1}$ which must be mutable in $T_{i-1}$.
Note this sequence may be infinite so long as there is a first arc in the sequence.
Baur and Graz note that mutating along a $T$-admissible sequence does not always result in a $\Ninftyclosed$-cluster.
I.e., the colimit of such a sequence of mutations may not be a $\Ninftyclosed$-cluster.

\begin{example}\label{xmp:transfinite mutation}
Let $T$ be an $\Ninftyclosed$-cluster in $\Cinftyclosed$ and $\{\alpha_i\}$ a $T$-admissible sequence of arcs.
Since each $\Ninftyclosed$-mutation $\mu_i:T_i\to T_{i+1}$ is also a continuous $\Ninftyclosed$-mutation any admissible sequence of arcs yields a sequence of continuous $\Ninftyclosed$-mutations.

Now suppose $\{\alpha_i\}\subset T$ and the result of mutating along $\{\alpha_i\}$ yields an $\Ninftyclosed$-cluster $T'$.
Then we let $S=\{\alpha_i\}$ and let $f:S\to [0,1]$ be given by $\alpha_i\mapsto 1-\frac{1}{i+1}$.
Let $S'=\{\mu_i(\alpha_i)\}$ and let $g:S'\to [0,1]$ be given by $\mu_i(\alpha_i)\mapsto 1-\frac{1}{i+1}$.
We now have a continuous $\Ninftyclosed$-mutation. 

In general, a $T$-admissible sequence of arcs can be ``grouped'' into intervals of arcs which each belong to the first cluster of the group.
This yields a sequence of $\Ninftyclosed$-mutations in a somewhat minimal way.
Of course, this does not work if $\{\alpha_i\}\subset T$ and mutation along $\{\alpha_i\}$ does not result in an $\Ninftyclosed$-cluster.
\end{example}

\begin{remark}\label{rem:sequences dont always yield clusters}
Let $\mathcal C$ be a Krull--Schmidt category with pairwise compatibility condition $\mathbf P$ on indecomposables such that $\mathbf P$ induces the $\mathbf P$-cluster theory of $\mathcal C$.
As seen in Example \ref{xmp:transfinite mutation} it might be possible to construct a sequence of (continuous) $\mathbf P$-mutations that does not yield a $\mathbf P$-cluster.
The authors of \cite{BaurGraz} provide a way to complete their compatible sets for their cluster theory.
\end{remark}

\begin{proposition}\label{prop:proj to inj}
Let $\AR$ have the straight descending orientation, $\mathcal Proj$ be the $\mathbf E$-cluster containing all the projectives from $\repAR$, and $\mathcal Inj$ be the $\mathbf E$-cluster containing the injectives from $\repAR$.
There is a sequence of continuous mutations $\{\mu_1, \mu_2\}$ from $\mathcal Proj$ to $\mathcal Inj$.
\end{proposition}
\begin{proof}
Recall that every indecomposable in $\CAR$ comes from an indecomposable $M_I$ in $\repAR$ (Definition \ref{def: MI}, Theorem \ref{thm:indecomp projs}, Proposition \ref{prop:derived is Krull--Schmidt}, and \cite[Proposition 3.1.4]{IgusaRockTodorov2}). Recall also that $|a,b|$ means the inclusion of $a$ or $b$ is either indeterminate or clear from context (see Conventions in the introduction) and Theorem \ref{thm:GeneralizedBarCode}).
Note that $\mathcal Proj\cap \mathcal Inj= \{M_{(-\infty,+\infty)}\}$.

We construct two continuous $\mathbf E$-mutations to mutate $\mathcal Proj$ to $\mathcal Inj$.
First, let $S_1=\mathcal Proj$ and define $f_1:\mathcal Proj\to [0,1]$ in two parts.
For $M_{(-\infty, x)}$ and $M_{(-\infty,x]}$ in $\mathcal Proj$, we let 
\begin{align*}
	f_1\left(M_{(-\infty,x)}\right)&=\frac{1}{2} - \left(\frac{\tan^{-1} x}{2\pi} + \frac{1}{4}\right) &
	f_1\left(M_{(-\infty,x]}\right)&=1 - \left(\frac{\tan^{-1} x}{2\pi} + \frac{3}{4} \right).
\end{align*}
The ``middle'' $\mathbf E$-cluster is 
\begin{displaymath} T_2:=\left\{M_{(-\infty,+\infty)}\right\} \cup \left\{M_{[x,+\infty)},M_{[x,x]} \mid x\in\R\right\}.\end{displaymath} 
We then define $g_1:T_2\to [0,1]$ to match with $f_1$:
\begin{align*}
g_1\left(M_{[x,x]}\right) &=\frac{1}{2} - \left(\frac{\tan^{-1} x}{2\pi} + \frac{1}{4}\right) &
g_1\left(M_{[x,+\infty)}\right) &= 1 - \left(\frac{\tan^{-1} x}{2\pi} + \frac{3}{4} \right).
\end{align*}
Both $f_1$ and $g_1$ are injections and we may define $\mu_1(M) = g^{-1}(f(M))$ and obtain the continuous $\mathbf E$-mutation $\mu_1:\mathcal Proj\to T_2$.

Now let $S_2= \{M_{[x,x]} \mid x\in\R\}\subset T_2$ and $S'_2 = \{M_{(x,+\infty)} \mid x\in\R\}\subset \mathcal Inj$.
We define $f_2:T_2\to [0,1]$ and $g_2:\mathcal Inj\to [0,1]$ by
\begin{displaymath}
f_2 \left( M_{[x,x]} \right) = \frac{\tan^{-1} x}{\pi} + \frac{1}{2} = g_2 \left( M_{(x,+\infty)} \right).
\end{displaymath}
We define $\mu_2 (M)$ to be $M$ if $M\notin S_2$ and $g^{-1}(f(M))$ if $M\in S_2$.
This gives the continuous $\mathbf E$-mutation $\mu_2:T_2\to\mathcal Inj$.
Thus we have a sequence of continuous $\mathbf E$-mutations $\{\mu_1,\mu_2\}$ to mutate the projectives into the injectives.
\end{proof}

\subsection{Mutation Paths}\label{sec:mutation paths}
In this section we define mutation paths, which should be thought of as a generalization of a sequence of mutations.
At first we formally define a long sequence of continuous mutations (Definition \ref{def:lsequence of continuous mutations}) and then move on to mutation paths in general (Definition \ref{def:mutation path}).
Note also that a continuous mutation is an example of a mutation path (Example \ref{xmp:continuous mutation is mutation path}) just as a mutation is an example of a continuous mutation.

A mutation path should be thought of as a generalization of a path of mutations in the exchange graph of a cluster structure.
This is formalized in Section \ref{sec:space of mutations}.
As before, our definitions are for any cluster theory but our interest is in $\mathbf E$-cluster theories of $\AR$ quivers.

\begin{definition}\label{def:lsequence of continuous mutations}
 
Let
\begin{displaymath}
\overline{\mu}=\{{_i\mu} \mid {_iT}_0\to {_i}T_1\}_{i\in \Z}
\end{displaymath}
be a collection of continuous mutations such that ${_iT}_1 = {_{i+1}T}_0$.
This yields a diagram in $\mathcal Sets$:
\begin{displaymath}\xymatrix@C=8ex{
\cdots \ar[r]^-{_{i-1}\mu} & {_{i-1}T}_1 = {_iT}_0 \ar[r]^-{_i\mu} & {_iT}_1 = {_{i+1}T}_0 \ar[r]^-{_{i+1}\mu} & {_{i+1}T}_1 = {_{i+2}T}_0 \ar[r]^-{_{i+2}\mu} & \cdots
}\end{displaymath}
If this diagram has a limit and colimit we call $\overline{\mu}$ a \ul{long sequence of continuous mutations} and we call the limit and colimit the \ul{source} and \ul{target} of $\overline{\mu}$, respectively.
\end{definition}

\begin{definition}\label{def:mutation path}
Define a category $\mathcal I$ whose objects are pairs $(x,i)\in [0,1]\times\{0,1\}$.
Consider $[0,1]$ and $\{0,1\}$ with their respective usual total ordering.
Morphisms in $\mathcal I$ are defined by
\begin{displaymath}
\Hom_{\mathcal I}\left( (s,i), (t,j) \right) :=
\left\{\begin{array}{ll} \{*\} & s<t \text{ or } (s=t\text{ and }i\leq j) \\
\emptyset & \text{otherwise.} \end{array}\right.
\end{displaymath}

Let $\overline{\mu}:\mathcal I\to \mathcal{S}ets$ be a functor such that $\overline{\mu}*:\overline{\mu}(s,0)\to \overline{\mu}(s,1)$ is a (possibly trivial) $\mathbf P$-mutation in $\mathscr T_{\mathbf P}(\mathcal C)$.
Then we call $\overline{\mu}$ a \ul{$\mathbf P$-mutation path}.
\end{definition}

\begin{remark}\label{rmk:mutation path}
The reader may notice that the target of the functor is not $\mathscr T_{\mathbf P}(\mathcal C)$, but just $\mathcal{S}ets$.
This is because we have not defined $\mathscr T_{\mathbf P}(\mathcal C)$ (in Definition \ref{def:cluster theory}) to be closed under any kind of transfinite composition.
However, transfinite composition is indeed sometimes defined in $\mathcal{S}ets$.
For example, if every set in a diagram has the same cardinality and every morphism is a bijection, the transfinite composition is well-defined (and in this case is also a bijection).
We only ensure the smallest morphisms $(s,0)\to (s,1)$ are in $\mathscr T_{\mathbf P}(\mathcal C)$.
\end{remark}

\begin{proposition}\label{prop:inverse path}
 
Let $\overline{\mu}:\mathcal I\to \mathcal Sets$ be $\mathbf P$-mutation path.

Let $\overline{\mu}^{-1}:\mathcal I\to \mathcal Sets$ be a functor given by
\begin{align*}
\overline{\mu}^{-1} (s,i) &:= \overline{\mu} (1-s, 1-i) \\
\overline{\mu}^{-1}( (s_i)\to (t,j) ) &:= \overline{\mu}((1-t,1-j) \to (1-s,1-i)).
\end{align*}
Then $\overline{\mu}^{-1}$ is also a $\mathbf P$-mutation path.
\end{proposition}
\begin{proof}
Since $\mathscr T_{\mathbf P}(\mathcal C)$ is a groupoid inside $\mathcal Sets$ the definition of $\overline{\mu}^{-1}$ amounts to reversing the order of the objects and taking the inverse morphism between each pair of objects in the image.
\end{proof}

\begin{example}\label{xmp:continuous mutation is mutation path}
 
Let $\mu:T\to T'$ be a continuous $\mathbf P$-mutation.
Let $\bar{\mu}:\mathcal I\to \mathcal Sets$ be defined in the following way.
On objects,
\begin{align*} 
\overline{\mu} (s,0) &= (T\setminus f^{-1}[0,s))\cup g^{-1}[0,s) \\ 
\overline{\mu} (s,1) &= (T\setminus f^{-1}[0,s])\cup g^{-1}[0,s].
\end{align*} 
By Proposition \ref{prop:a continuum of mutations}, for each $s\in[0,1]$, $\mu$ defines a $\mathbf P$-mutation $\overline{\mu}(s,0)\to \overline{\mu}(s,1)$.
Define $\overline{\mu}:\overline{\mu}(s,0)\to\overline{\mu}(s,1)$ to be precisely this $\mathbf P$-mutation.
Thus each continuous $\mathbf P$-mutation is a $\mathbf P$-mutation path.
\end{example}

Below we construct some variables $i_s$, $a_s$, $b_s$, and $t_s$ for each $s\in[0,1]$.
We use these to show how a long sequence of continuous mutations can be considered as a mutation path.

\begin{construction}\label{con:sequence of continuous mutations variables}
 
Let $\overline{\mu}$ be a long sequence of continuous mutations and fix $0 < \e << 1$.
For each $s\in(0,1)$, there exists $i \in \Z$ such that
\begin{displaymath}
\left(\frac{\tan^{-1} i}{\pi} + \frac{1}{2} \right) \leq s < \left( \frac{\tan^{-1}(i+1)}{\pi} + \frac{1}{2} \right).
\end{displaymath}
Note that since the right inequality is strict, there is a unique such $i$ for each $s\in(0,1)$.
Denote it by $i_s$.
Let
\begin{align*}
a_s &:= \left( \frac{\tan^{-1} i_s}{\pi} + \frac{1}{2} \right) \\
b_s &:= \left( \frac{\tan^{-1} (i_s+1)}{\pi} + \frac{1}{2} \right).
\end{align*}
Note that if $i_s = i_{s'}$ for $s$ and $s'$ then $a_s=a_{s'}$ and $b_s=b_{s'}$.
We now define $t_s$:
\begin{displaymath}
t_s := \left\{\begin{array}{ll}
0 & s\in [a_s, \, \, (1-\e)a_s + \e b_s] \\
(s-(1-\e)a_s -\e b_s)/ ((1-2\e)(b_s - a_s)) & s\in [(1-\e)a_s + \e b_s,\, \, \e a_s + (1-\e)b_s] \\
1 & s\in (\e a_s + (1-\e)b_s, \, \, b_s).
\end{array}\right.
\end{displaymath}
We provide a picture to make the variables easier to understand for $s\in [\frac{1}{2},\frac{3}{4}]$:
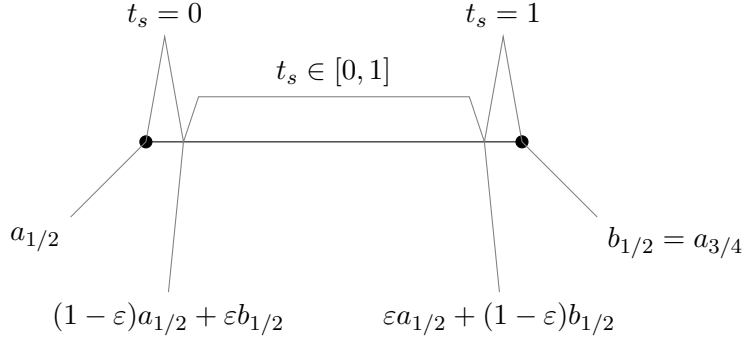
\begin{figure}
\centering
\begin{tikzpicture}[scale = 20]
\draw (.5,0) -- (.75,0);
\filldraw (.5,0) circle[radius=.04mm];
\filldraw (.75,0) circle[radius=.04mm];

\draw[white!50!black] (0.5,0) -- (.45,-.05);
\draw (.45,-.05) node [anchor=north east] {$a_{1/2}$};
\draw[white!50!black] (0.525,0) -- (.515,-.1);
\draw (.515,-.1) node[anchor=north] {$(1-\e)a_{1/2} + \e b_{1/2}$};
\draw[white!50!black] (0.725,0) -- (.735,-.1);
\draw (.735,-.1) node [anchor=north] {$\e a_{1/2} + (1-\e)b_{1/2}$};
\draw[white!50!black] (.75,0) -- (.8,-.05);
\draw (.8,-.05) node [anchor=north west] {$b_{1/2}=a_{3/4}$};

\draw[white!50!black] (.5,0) -- (.5125,.07) -- (.525,0) -- (.535,.03) -- (.715,.03) -- (.725,0) -- (.7375,.07) -- (.75,0);
\draw (.625,.03) node [anchor=south] {$t_s\in [0,1]$};
\draw (0.7375,.07) node[anchor=south] {$t_s=1$};
\draw (0.5125,.07) node[anchor=south] {$t_s=0$};
\end{tikzpicture}
\caption{Schematic of $t_s$ for $s\in[\frac{1}{2},\frac{3}{4}]$.}\label{fig:ts schematic}
\end{figure}
\end{construction}

\begin{proposition}\label{prop:sequence of continuous mutation is a mutation path}
Let $\overline{\mu}$ be a (long) sequence of continuous mutations.
Then $\overline{\mu}$ is also a mutation path.
\end{proposition}
\begin{proof}
We may consider $\overline{\mu}$ as a functor $\mathcal I\to \mathcal Sets$ in the following way.
We now make our assignment on objects:
\begin{align*} 
(s,0) &\mapsto \left\{\begin{array}{ll} {_iT}_0 = {_{i-1}T}_1 & s\in [a_s, (1-\e) a_s + \e b_s ) \\
	( {_{i_s}T}\setminus {_if}^{-1}[0,t_s) )\cup {_ig}^{-1}[0,t_s)  & s\in [(1-\e) a_s + \e b_s, \e a_s + (1-\e)b_s] \\
	{_iT}_1={_{i+1}T}_0 & s\in (\e a_s + (1-\e) b_s, b_s) \end{array}\right. \\ 
(s,1) &\mapsto \left\{\begin{array}{ll} {_iT}_0 = {_{i-1}T}_1 & s\in [a_s, (1-\e) a_s + \e b_s ) \\
	( {_{i_s}T}\setminus {_if}^{-1}[0,t_s] )\cup {_ig}^{-1}[0,t_s] & s\in [(1-\e) a_s + \e b_s, \e a_s + (1-\e)b_s] \\
	{_iT}_1={_{i+1}T}_0 & s\in (\e a_s + (1-\e) b_s, b_s). \end{array}\right. 
\end{align*}
When $s\in [(1-\e) a_s + \e b_s, \e a_s + (1-\e)b_s]$ we see by Proposition \ref{prop:a continuum of mutations} that the morphism $*:(s,0)\to (s,1)$ is sent to a (possibly trivial) $\mathbf P$-mutation. 
When $s\in [a_s, (1-\e) a_s + \e b_s)\cup (\e a_s + (1-\e)b_s, b_s)$ the morphism $*:(s_0)\to (s_1)$ is sent to the trivial $\mathbf P$-mutation on $\bar{\mu}(s,0)$. 
This defines a mutation path.
\end{proof}

The $\e$ ``padding'' in Construction \ref{con:sequence of continuous mutations variables} is necessary to prove Proposition \ref{prop:sequence of continuous mutation is a mutation path}.
If we did not have the ``padding'' we would attempt to assign two $\mathbf P$-mutations, or their composition, to morphisms such as $*:(\frac{3}{4},0)\to(\frac{3}{4},1)$.

\begin{remark}\label{rmk:inverses}
 
Let $\overline{\mu}$ be a long sequence of continuous $\mathbf P$-mutations.
We see in Proposition \ref{prop:sequence of continuous mutation is a mutation path} that for a fixed $\e$ the the inverse \emph{path} $\overline{\mu}^{-1}$ agrees with the inverse \emph{sequence} $\{{_{-i}\mu}\}_{i\in\Z}$.
Thus when working with a long sequence of continuous mutations we need not be specific about which inverse we take as long as an $\e$ has been chosen.
\end{remark}

\begin{definition}\label{def:path composition}
 
Let $\overline{\mu}_1,\overline{\mu}_2:\mathcal I\to \mathcal Sets$ be two $\mathbf P$-mutation paths and suppose $\overline{\mu}_1(1,0) = \overline{\mu}_2(0,0)$ and $\overline{\mu}_1(1,1) = \overline{\mu}_2(0,1)$.

We define the composition of $\mathbf P$-mutation paths, denoted $\overline{\mu}_1\cdot \overline{\mu}_2$ in the following way:
\begin{align*}
\overline{\mu}_1\cdot \overline{\mu}_2(s,i) &:= \left\{ \begin{array}{ll} \overline{\mu}_1(2s,i) & 0\leq s \leq \frac{1}{2} \\ \overline{\mu}_2(2s-1, i) & \frac{1}{2}\leq s \leq 1.  \end{array}\right. \\
\overline{\mu}_1\cdot\overline{\mu}_2 ((s,0)\to (s,1)) &:=  \left\{ \begin{array}{ll} \overline{\mu}_1((2s,0)\to (2s,1)) & 0\leq s \leq \frac{1}{2} \\ \overline{\mu}_2((2s-1, 0)\to (2s-1,1)) & \frac{1}{2}\leq s \leq 1.  \end{array}\right.
\end{align*}
\end{definition}

\begin{proposition}\label{prop:composition is a path}
Let $\overline{\mu}_1$ and $\overline{\mu}_2$ be $\mathbf P$-mutation paths such that
\begin{displaymath}
\overline{\mu}_1 (1,0) = \overline{\mu}_2 (0,0) \qquad \text{and} \qquad \overline{\mu}_1 (1,1) = \overline{\mu}_2(0,1).
\end{displaymath}
Then $\overline{\mu}_1\cdot \overline{\mu}_2$ is a $\mathbf P$-mutation path.
\end{proposition}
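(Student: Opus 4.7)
The plan is to verify both clauses of Definition \ref{def:mutation path} for $\overline{\mu}_1\cdot\overline{\mu}_2$: that it is a functor $\mathcal{I}\to\mathcal{S}et$ and that each atomic morphism $*:(s,0)\to(s,1)$ is sent to a (possibly trivial) $\mathbf{P}$-mutation. Everything reduces to a three-case split on whether $s$ lies in $[0,\tfrac{1}{2})$, equals $\tfrac{1}{2}$, or lies in $(\tfrac{1}{2},1]$, and the only real work is checking that the two piecewise rules agree at the seam $s=\tfrac{1}{2}$. The hypotheses $\overline{\mu}_1(1,i)=\overline{\mu}_2(0,i)$ for $i=0,1$ are tailored precisely for this.

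First I would check well-definedness at $s=\tfrac{1}{2}$. On objects, the first rule outputs $\overline{\mu}_1(1,i)$ and the second outputs $\overline{\mu}_2(0,i)$; these coincide by hypothesis. On the atomic morphism $*:(\tfrac{1}{2},0)\to(\tfrac{1}{2},1)$, the first rule assigns $\overline{\mu}_1((1,0)\to(1,1))$ and the second assigns $\overline{\mu}_2((0,0)\to(0,1))$; both are $\mathbf{P}$-mutations with common source and common target, and since any $\mathbf{P}$-mutation is determined by the singleton swap it performs (or is the identity when source equals target), the two assignments coincide.

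Next I would extend the functor beyond atomic morphisms, since Definition \ref{def:path composition} only specifies the image of those. For a morphism $*:(s,i)\to(t,j)$ in $\mathcal{I}$, set the image to $\overline{\mu}_1((2s,i)\to(2t,j))$ when $t\le\tfrac{1}{2}$, to $\overline{\mu}_2((2s-1,i)\to(2t-1,j))$ when $s\ge\tfrac{1}{2}$, and when $s<\tfrac{1}{2}<t$ define it as the composite in $\mathcal{S}et$ of the two halves factored through $(\tfrac{1}{2},0)$. Functoriality inside each half is inherited from $\overline{\mu}_1$ and $\overline{\mu}_2$, and compatibility across the seam reduces to the matched-object and matched-atomic-morphism checks performed above. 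For the mutation condition, the image of $*:(s,0)\to(s,1)$ is a $\mathbf{P}$-mutation by hypothesis on $\overline{\mu}_i$ when $s\ne\tfrac{1}{2}$, and by the gluing argument at $s=\tfrac{1}{2}$. The main obstacle is purely bookkeeping at the seam; no new ideas beyond the uniqueness of a $\mathbf{P}$-mutation up to its source and target are required.
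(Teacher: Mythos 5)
Your proposal is correct and follows essentially the same route as the paper's proof: verify agreement of the two piecewise rules at the seam $s=\tfrac{1}{2}$ and send a morphism $(s,i)\to(t,j)$ that crosses the seam to the composite factored through $(\tfrac{1}{2},0)$ and $(\tfrac{1}{2},1)$. Your extra observation that the two atomic-morphism assignments at $s=\tfrac{1}{2}$ agree because a $\mathbf P$-mutation is determined by its source and target is a small point the paper leaves implicit, but it is a refinement of, not a departure from, the same argument.
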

\begin{proof}
By assumption the definitions agree at $\frac{1}{2}$.
For $0\leq s < \frac{1}{2}$ and $\frac{1}{2} < t \leq 1$, the morphism $\overline{\mu}_1\cdot \overline{\mu}_2*:\overline{\mu}_1\cdot \overline{\mu}_2(s,i) \to \overline{\mu}_1\cdot \overline{\mu}_2(t,j)$ is the composition
\begin{displaymath} \overline{\mu}_1\cdot \overline{\mu}_2(s,i)\to \overline{\mu}_1\cdot \overline{\mu}_2\left(\frac{1}{2},0\right) \to \overline{\mu}_1\cdot \overline{\mu}_2 \left(\frac{1}{2},1\right) \to \overline{\mu}_1\cdot \overline{\mu}_2(t,j). \qedhere \end{displaymath}
\end{proof}.

\begin{remark}\label{rmk:composition is not a sequence}
The composition of two long sequences of continuous mutations as in Definition \ref{def:path composition} is not a long sequence of continuous mutations as in Proposition \ref{prop:sequence of continuous mutation is a mutation path}.
\end{remark}
%

\subsection{Connection to $\NRSclosed$-mutations}\label{sec:connections to E-mutation paths}
The more interesting pictures of $\NRSclosed$ mutations (Section~\ref{sec:other orientations of AR}) are those of continuous mutations.
In this section we use our geometric models to show how one may picture a continuous $\mathbf E$-mutation by drawing the corresponding continuous $\NRSclosed$-mutation.
In particular, those continuous mutations that cannot be described as any type of sequence of mutations, which is discrete.
Consider $\AR$ with straight descending orientation.
Let $T$ be $\{P_x, M_{[x,x]} \mid x\in\R\}\cup\{P_{+\infty}\}$ and $\phi:\R \to (0,1)$ be some order \emph{reversing} bijection.
Let $f:\{P_x \mid x\in\R\}\to [0,1]$ be given by $P_x\mapsto \phi(x)$.
Let $g:\{I_x \mid x\in\R\}\to [0,1]$ be given by $I_x\mapsto \phi(x)$ and let $T'=\{I_x, M_{[x,x]} \mid x\in \R\}\cup \{P_{+\infty}\}$.
Then we have a continuous mutation $T\to T'$.

We would like to show what this looks like in terms of arcs.
Of course, we cannot depict each of the mutations at time $t$ for all $t\in(0,1)$, as we do not have uncountably-many pages.
However, we can think of the process as an animation and take a few select frames so that we have the general idea.
In Figure~\ref{fig:continuous mutation picture}, we only show 6 frames.
One could make a proper animation at a sufficiently high frame rate to get the full effect.

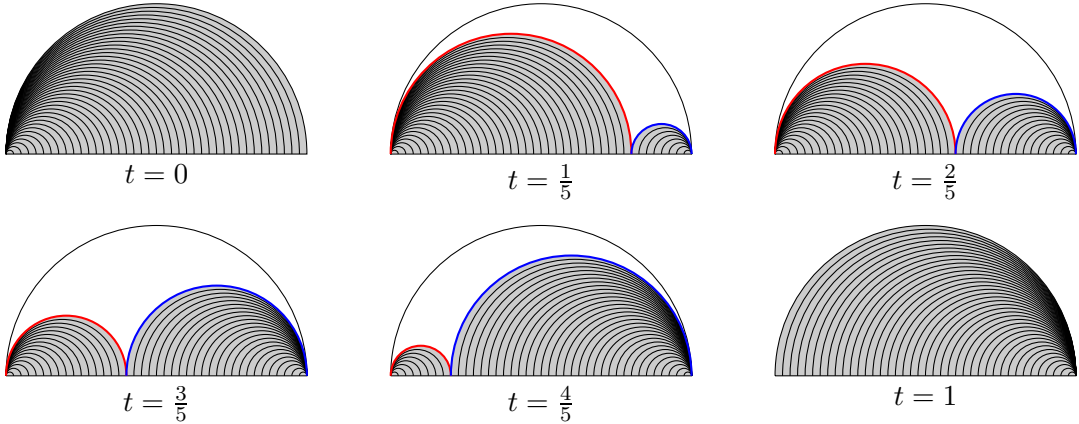
\begin{figure}[h]
\centering
\begin{tikzpicture}
\draw[white] (-0.1,-0.8) -- (4.1,-0.8) -- (4.1,2.1) -- (-0.1,2.1) -- (-0.1,-0.8); 
\filldraw[fill=white!80!black, draw=black] (0,0) arc (180:0:2) -- (0,0);
\foreach \x in {1,...,39}
	\draw (0,0) arc (180:0:0.05*\x);

\draw (2,0) node[anchor=north] {$t=0$};
\end{tikzpicture} \qquad
\begin{tikzpicture}
\draw[white] (-0.1,-0.8) -- (4.1,-0.8) -- (4.1,2.1) -- (-0.1,2.1) -- (-0.1,-0.8); 
\filldraw[fill=white!80!black, draw=white] (4,0) arc (0:180:0.4) -- (4,0);
\filldraw[fill=white!80!black, draw=white] (0,0) arc (180:0:1.6) -- (0,0);
\draw (0,0) arc (180:0:2) -- (0,0);
\foreach \x in {1,...,31}
	\draw (0,0) arc (180:0:0.05*\x);
	
\foreach \x in {1,...,7}
	\draw (4,0) arc (0:180:0.05*\x);

\draw[red, thick] (0,0) arc (180:0:1.6);
\draw[blue, thick] (4,0) arc (0:180:0.4);
\draw (2,0) node[anchor=north] {$t=\frac{1}{5}$};
\end{tikzpicture} \qquad 
\begin{tikzpicture}
\draw[white] (-0.1,-0.8) -- (4.1,-0.8) -- (4.1,2.1) -- (-0.1,2.1) -- (-0.1,-0.8); 
\filldraw[fill=white!80!black, draw=white] (4,0) arc (0:180:0.8) -- (4,0);
\filldraw[fill=white!80!black, draw=white] (0,0) arc (180:0:1.2) -- (0,0);
\draw (0,0) arc (180:0:2) -- (0,0);
\foreach \x in {1,...,23}
	\draw (0,0) arc (180:0:0.05*\x);
	
\foreach \x in {1,...,15}
	\draw (4,0) arc (0:180:0.05*\x);

\draw[red, thick] (0,0) arc (180:0:1.2);
\draw[blue, thick] (4,0) arc (0:180:0.8);
\draw (2,0) node[anchor=north] {$t=\frac{2}{5}$};
\end{tikzpicture}
\begin{tikzpicture}
\draw[white] (-0.1,-0.8) -- (4.1,-0.8) -- (4.1,2.1) -- (-0.1,2.1) -- (-0.1,-0.8); 
\filldraw[fill=white!80!black, draw=white] (4,0) arc (0:180:1.2) -- (4,0);
\filldraw[fill=white!80!black, draw=white] (0,0) arc (180:0:0.8) -- (0,0);
\draw (0,0) arc (180:0:2) -- (0,0);
\foreach \x in {1,...,15}
	\draw (0,0) arc (180:0:0.05*\x);
	
\foreach \x in {1,...,23}
	\draw (4,0) arc (0:180:0.05*\x);

\draw[red, thick] (0,0) arc (180:0:0.8);
\draw[blue, thick] (4,0) arc (0:180:1.2);
\draw (2,0) node[anchor=north] {$t=\frac{3}{5}$};
\end{tikzpicture} \qquad
\begin{tikzpicture}
\draw[white] (-0.1,-0.8) -- (4.1,-0.8) -- (4.1,2.1) -- (-0.1,2.1) -- (-0.1,-0.8); 
\filldraw[fill=white!80!black, draw=white] (4,0) arc (0:180:1.6) -- (4,0);
\filldraw[fill=white!80!black, draw=white] (0,0) arc (180:0:0.4) -- (0,0);
\draw (0,0) arc (180:0:2) -- (0,0);
\foreach \x in {1,...,7}
	\draw (0,0) arc (180:0:0.05*\x);
	
\foreach \x in {1,...,31}
	\draw (4,0) arc (0:180:0.05*\x);

\draw[red, thick] (0,0) arc (180:0:0.4);
\draw[blue, thick] (4,0) arc (0:180:1.6);
\draw (2,0) node[anchor=north] {$t=\frac{4}{5}$};
\end{tikzpicture} \qquad
\begin{tikzpicture}
\draw[white] (-0.1,-0.8) -- (4.1,-0.8) -- (4.1,2.1) -- (-0.1,2.1) -- (-0.1,-0.8); 
\filldraw[fill=white!80!black, draw=black] (0,0) arc (180:0:2) -- (0,0);
\foreach \x in {1,...,39}
	\draw (4,0) arc (0:180:0.05*\x);
\draw (2,0) node[anchor=north] {$t=1$};
\end{tikzpicture}
\caption{Six frames depicting a continuous $\NRclosed$-mutation. (All arcs have orientation left to right.) In the frames between $t=0$ and $t=1$ we mutate the red arc to the blue arc. The first and sixth frames are $T$ and $T'$, respectively. The other four frames are at time $\frac{i}{5}$ for $i\in\{1,2,3,4\}$. We include $\approx 40$ arcs of the uncountably many in the same way one includes level curves in a topographical map.}\label{fig:continuous mutation picture}
\end{figure}

\subsection{Space of Mutations}\label{sec:space of mutations}
In this section we define the space of mutations (Definition \ref{def:space of mutations}) which generalizes the exchange graph of a cluster structure.
The intent is to view mutation paths (Definition \ref{def:mutation path}) as paths in a topological space just as a sequence of mutations of a cluster structure forms a path in the exchange graph.
This majority of this section is for cluster theories in general.
However, its purpose is to study $\mathbf E$-clusters in the future and so we return our attention to $\mathbf E$-clusters at the end of the section.

Since the class of indecomposable objects in $\mathcal C$ form a set and $\C$ is Krull--Schmidt, we see $\mathcal C$ is small.
In particular, the class of morphisms in $\mathcal C$ is a set.
Denote the set of mutations by $(\mathscr T_{\mathbf P}(\mathcal C))_1$.

\begin{definition}\label{def:induced path}
	Let $\overline{\mu}$ be a $\mathbf P$-mutation path and denote by $\pmu$ the induced function from $[0,1]$ to $(\mathscr T_{\mathbf P}(\mathcal C))_1$.
\end{definition}

\begin{definition}\label{def:space of mutations}
We define the set $\mathbf P(\mathcal C)\subset (\mathscr T_{\mathbf P}(\mathcal C))_1$ to be the set containing all (trivial) $\mathbf P$-mutations.

We give the set of $\mathbf P$-mutations a topology in the following way.
Consider $[0,1]$ with the usual topology.
A set $U\subset \mathbf P(\mathcal C)$ is called \ul{open} if, for all $\pmu:[0,1]\to \mathbf P(\mathcal C)$ induced by a $\mathbf P$-mutation $\overline{\mu}$, $\pmu^{-1}(U)$ is open in $[0,1]$.
We call $\mathbf P(\mathcal C)$ the \ul{space of $\mathbf P$-mutations}.
\end{definition}

\begin{proposition}\label{prop:space of mutations}
Then the open sets in Definition \ref{def:space of mutations} form a topology on $\mathbf P(\mathcal C)$.
\end{proposition}
\begin{proof}
Trivially, both $\emptyset$ and $\mathbf P(\mathcal C)$ are open.
Suppose $\pmu:[0,1]\to \mathbf P(\mathcal C)$ is induced by a $\mathbf P$-mutation path $\overline{\mu}$.
Let $\{U_1,\ldots,U_n\}$ be open in $\mathbf P(\mathcal C)$.
Since $\displaystyle\bigcap_{i=1}^n \pmu^{-1}(U_i) = \pmu^{-1}\left( \bigcap_{i=1}^n U_i\right)$, we see that $\bigcap_{i=1}^n U_i$ is open in $\mathbf P(\mathcal C)$.
Now consider a collection $\{U_\alpha\}$ of open sets in $\mathbf P(\mathcal C)$.
Since $\displaystyle\bigcup_\alpha \pmu^{-1}(U_\alpha) = \pmu^{-1}\left( \bigcup_\alpha U_\alpha \right)$, we see that $\bigcup_\alpha U_\alpha$ is open in $\mathbf P(\mathcal C)$.
This concludes the proof.
\end{proof}

\begin{remark}\label{rmk:cluster endpoints}
We consider a $\mathbf P$-cluster $T$ to be the trivial mutation $T\to T$ in $\mathbf P(\mathcal C)$.
We wish to consider paths that start and end at clusters rather than at mutations (see Proposition \ref{prop:path endpoints}).
\end{remark}

\begin{proposition}\label{prop:not Hausdorff}
The space of $\mathbf P$-mutations is not Hausdorff.
\end{proposition}
\begin{proof}
Let $\mu:T\to (T\setminus\{X\})\cup \{Y\}$ be a $\mathbf P$-muation.
Let $\overline{\mu}$ be the $\mathbf P$-mutation path that induces the path $\pmu$ given by
\begin{displaymath} \pmu(t) = \left\{\begin{array}{ll} T & t<1\\ \mu & t=1\end{array}\right.\end{displaymath}
Let $U$ be an open set that contains $\mu$.
If $T\notin U$ then $\pmu^{-1}(U)$ is not open.
This would be a contradiction and so $T\in U$.
Thus, for any $\mathbf P$-mutation $\mu:T\to T'$ and open set $U$ containing $\mu$, $T,T'\in U$ as well.
Therefore, $\mathbf P(\mathcal C)$ is not Hausdorff.
\end{proof}

\begin{proposition}\label{prop:path endpoints}
Let $p:[0,1]\to \mathbf P(\mathcal C)$ be a path in $\mathbf P(\mathcal C)$.
Then there is a path $q:[0,1]\to \mathbf P(\mathcal C)$ whose endpoints are clusters (see Remark \ref{rmk:cluster endpoints}) such that $p$ and $q$ are homotopic.
\end{proposition}
\begin{proof}
Let $p:[0,1]\to \mathbf P(\mathcal C)$ be a path in $\mathbf P(\mathcal C)$, let $T_0$ be the source of $p(0)$, and let $T_1$ the target of $p(1)$.

For any $0<\e<<\frac{1}{2}$, let $q_\e:[0,1]\to\mathbf P(\mathcal C)$ be the path given by:
\begin{displaymath}
q_\e(t) = \left\{\begin{array}{ll}
T_0 & \text{if }t<\e \\
T_1 & \text{if }(1-\e)<t \\
p\left( (t-\frac{1}{2})(1-2\e) + \frac{1}{2}\right) & \text{if }\e\leq t \leq (1-\e)
\end{array}\right.
\end{displaymath}
We see that $q_\e$ is homotopic to the composition of three paths.
The first is constant at $T_0$ except the last point is $p(0)$.
The second is $p$.
The third is constant at $T_1$ except the first point is $p(1)$.
In particular, the first and third path are induced by $\mathbf P$-mutation paths.
Thus, $q_\e$ is indeed a path.
We just say $q_0=p$.

Fix a $0<\e<<\frac{1}{2}$.
Let $H:[0,1]\times [0,1]\to \mathbf P(\mathcal C)$ be given by:
\begin{displaymath}
H(t,s) := q_{s\e}(t).
\end{displaymath}
Let $U$ be open in $\mathbf P(\mathcal C)$.
If the inverse image of $U$ does not contain $p(0)$ or $p(1)$ then $H^{-1}(U)$ is open in $[0,1]\times[0,1]$.

Now suppose $U$ contains $p(0)$.
By the proof of Proposition \ref{prop:not Hausdorff} we see that $T_0\in U$ as well.
Similarly, if $p(1)\in U$ then $T_1\in U$.
Therefore, if $U$ is open in $\mathbf P(\mathcal C)$ then $H^{-1}(U)$ is open in $[0,1]\times[0,1]$, completing the proof.
\end{proof}

\begin{definition}\label{def:reachable}
Let $T_1$ and $T_2$ be $\mathbf P$-clusters of $\mathcal C$.
\begin{enumerate}
\item We say $T_2$ is \ul{reachable} from $T_1$ if there is a path $p:[0,1]\to \mathbf P(\mathcal C)$ such that $p(0)=T_1$ and $p(1)=T_2$.
\item We say $T_2$ is \ul{strongly reachable} from $T_1$ if there is a $\mathbf P$-mutation path $\overline{\mu}$ that (i) comes from a long sequence of continuous $\mathbf P$-mutations and (ii) indues a path $\pmu:[0,1]\to \mathbf P(\mathcal C)$ such that $\pmu(0)=T_1$ and $\pmu(1)=T_2$.
\end{enumerate}
\end{definition}

\begin{theorem}\label{thm:projectives reach injectives}
Let $\AR$ be the continuous quiver of type $A$ with straight descending orientation.
The cluster of injectives, $\mathcal Inj$ is strongly reachable from the cluster of projectives, $\mathcal Proj$.
\end{theorem}
\begin{proof}
In Proposition \ref{prop:proj to inj} we see there is a sequence of $\mathbf E$-mutations $\{\mu_1,\mu_2\}$ to mutate $\mathcal Proj$ to $\mathcal Inj$.
Choose some $0<\e<<\frac{1}{2}$ and note that a sequence of $\mathbf E$-mutations is also a long sequence of $\mathbf E$-mutations.
Then, as in Proposition \ref{prop:sequence of continuous mutation is a mutation path}, we have a $\mathbf E$-mutation path $\overline{\mu}$ with source $\mathcal Proj$ and target $\mathcal Inj$.
\end{proof}

\subsubsection*{Open Questions}
Let $\ARS$ be a continuous quiver of type $A$ and $\mathscr T_{\mathbf E}(\CARS)$ the $\mathbf E$-cluster theory of $\mathcal C$ (Definition \ref{def:E-cluster}).
\begin{itemize}
\item Is the space $\mathbf E(\CARS)$ path connected?
\item If $\mathbf E(\CARS)$ is not path-connected, what do its path components look like? What does the path component containing the cluster of projectives look like?
\item If $\mathbf E(\CARS)$ is not path-connected, are there clusters $T$ and $T'$ that are reachable but not strongly reachable from one another.
\item If $\mathbf E(\CARS)$ is path connected, which clusters are strongly reachable from the cluster of projectives?
\end{itemize}

\end{document}